\newtheorem{theorem}{Theorem}[section]
\newtheorem{cor}[theorem]{Corollary}
\newtheorem{lemma}[theorem]{Lemma}
\newtheorem{prop}[theorem]{Proposition}
\theoremstyle{definition}
\newtheorem{define}[theorem]{Definition}
\newtheorem{remark}[theorem]{Remark}
\newcommand{\field}[1]{\mathbb{#1}}
\newcommand{\rz}{\field{R}/\field{Z}}
\begin{document}
\title[Index maps and mapping cones]{Analytic and topological index maps with values in the $K$-theory of mapping cones}
\author{Robin J. Deeley}
\subjclass[2010]{Primary: 58J22; Secondary: 19K33, 19K35, 46L80}
\keywords{index theory, $K$-homology, mapping cones, higher Atiyah-Patodi-Singer index theory}
\date{\today}
\begin{abstract}
Index maps taking values in the $K$-theory of a mapping cone are defined and discussed.  The resulting index theorem can be viewed in analogy with the Freed-Melrose index theorem.  The framework of geometric $K$-homology is used in a fundamental way.  In particular, an explicit isomorphism from a geometric model for $K$-homology with coefficients in a mapping cone, $C_{\phi}$, to $KK(C(X),C_{\phi})$ is constructed.
\end{abstract}
\maketitle

\section{Introduction}
The Baum-Douglas or $(M,E,f)$ model for $K$-homology is a fundamental tool in the study of index theory.  Since its introduction in \cite{BD}, it has been used to study both classical and exotic index theory. In particular, it is useful to construct variants of the Baum-Douglas model which are associated to various index problems; for example, models associated to non-integer valued index maps are of interest. We refer to the Baum-Douglas model and its variants as geometric models and assume the reader is familiar with the original $(M,E,f)$-model, see any of \cite{BD, BD2, BHS, EM, Rav, Wal}.
\par
This paper is continuation of \cite{DeeGeoRelKhom}; the setup is as follows. Let $X$ be a finite CW-complex, $\phi:B_1 \rightarrow B_2$ be a unital $*$-homomorphism between unital $C^*$-algebras and $C_{\phi}$ be the mapping cone of $\phi$. In \cite{DeeGeoRelKhom}, a geometric model of the Kasparov group $KK^*(C(X),C_{\phi})$ was constructed. We denote the resulting abelian group by $K_*(X;\phi)$. (A more detailed review of notation can be found at the end of the introduction). The main results of the present paper are as follows:
\begin{enumerate}
\item the construction of an explicit (i.e., defined at the level of cycles) isomorphism $\lambda: K_*(X;\phi) \rightarrow KK^{*+1}(C(X),C_{\phi})$ modelled on the classical topological index map;
\item the construction of an explicit isomorphism (i.e., analytic index map) from $K_*(X;\phi)$ to $KK^{*+1}(C(X),C_{\phi})$ using higher Atiyah-Patodi-Singer index theory (see \cite{PS}) in the special case when $X$ is a point and $\phi_*: K_*(B_1) \rightarrow K_*(B_2)$ is injective;
\item a proof of the equality of these two maps when they are both defined, see Theorem \ref{indBouEqu} (this is an index theorem).
\end{enumerate}
\par
The starting point for the construction in \cite{DeeGeoRelKhom} was not only the work of Baum and Douglas \cite{BD,BD2}, but also Higson and Roe \cite{HigRoeEta}.  The particular case when $\phi$ is the unital inclusion of the complex numbers into a ${\rm II}_1$-factor is relevant for $\rz$-valued index theory; in this example, the map at the level of $K$-theory is the inclusion of the integers into the reals. Further motivation for the construction of this particular geometric model can be found in the introduction of \cite{DeeGeoRelKhom}. It should also be mentioned that the isomorphism from $K_*(X;\phi)$ to $KK^{*+1}(C(X),C_{\phi})$ considered in \cite{DeeGeoRelKhom} was rather indirect. Hence the desire for an explicit isomorphism. 
\par
The construction of this isomorphism (Item (1) above) is via neat embeddings of manifolds with boundary into half-spaces.  As such, in the case when $X$ is a point, it can be viewed as analogous to the classical topological index map. Based on this analogy, we refer to the isomorphism $K_*(X;\phi) \rightarrow KK^{*+1}(C(X), C_{\phi})$ obtained via this embedding process as the topological index when $X$ is a point.
\par
There is also an analytic index (Item (2) above) defined under the condition that $\phi_*: K_*(B_1) \rightarrow K_*(B_2)$ is injective. This (rather restrictive) condition ensures that the higher Atiyah-Patodi-Singer index can be defined. It is satisfied in the special case when $\phi$ is the unital inclusion of the complex number into a ${\rm II}_1$-factor and for other examples that model geometric $K$-homology with coefficients (see \cite[Example 5.3]{DeeGeoRelKhom}).  The equality of the two index maps is the content of Theorem \ref{indBouEqu}. This index theorem is analogous to the Freed-Melrose index theorem \cite{Fre,FM}.  
\par
My motivation for considering index theory with values in the K-theory of mappings cones is based on its relationship with $\rz$-valued index theory. There are other interesting examples. Recently, see \cite{CWY}, Chang, Weinberger, and Yu have considered the following framework. Let $\pi_1$ and $\pi_2$ be two discrete finitely generated groups and $\alpha: \pi_1 \rightarrow \pi_2$ be a group homomorphism. If $C^*(\pi_i)$ denotes the full group $C^*$-algebra of $\pi_i$, then (from $\alpha$) we obtain a $*$-homomorphism $\tilde{\alpha}:C^*(\pi_1) \rightarrow C^*(\pi_2)$. Index maps (in particular, a version of the Baum-Connes assembly map) taking value in $K_*(C_{\tilde{\alpha}})$ are considered in \cite{CWY}; these constructions are analytic in nature. In joint work with Magnus Goffeng (see \cite{DeeGofGroupCone}), we explore the connection between the work in \cite{CWY} and the more geometric results of the current paper. In particular, we consider an analytically defined index map without assuming $\phi_*: K_*(B_1) \rightarrow K_*(B_2)$ is injective.
\par
The prerequisites for the paper are as follows. Beyond knowledge of the Baum-Douglas model, we assume the reader is familiar the basic properties of Hilbert $C^*$-module bundles, see for example \cite[Section 2]{Sch}; the bundles we consider are always locally trivial. Section \ref{higIndSec} builds on properties of higher Atiyah-Patodi-Singer index theory (see \cite{PS} and references therein for details).  A number of constructions considered here require the framework of $KK$-theory (see \cite{Kas}). In particular, we generalize a number of constructions from \cite{BOOSW} to our setting.
\par
Throughout the paper, $N$ denotes a ${\rm II}_1$-factor, $B_1$ and $B_2$ unital $C^*$-algebras, $\phi: B_1 \rightarrow B_2$ a unital $*$-homomorphism, $C_{\phi}$ the mapping cone of $\phi$ and $X$ a finite CW-complex. The suspension of a $C^*$-algebra, $A$, is denoted by $SA$. If $B$ is a unital $C^*$-algebra, then the $C^*$-algebra of continuous $B$-valued function on $X$ is denoted by $C(X,B)$. Finitely generated projective Hilbert $B$-module bundles over $X$ that are locally trivial will be refer to as $B$-bundles over $X$. The Grothendieck group of (isomorphism classes of) $B$-bundles over $X$ is denoted by $K^0(X;B)$.  It is well-known (for example, \cite[Proposition 2.17]{Sch}) that 
$$K^0(X;B) \cong K_0(C(X,B))\cong K_0(C(X)\otimes B).$$ 
Given a $B$-bundle over $X$, we have classes $[E] \in K^0(C(X)\otimes B)$ and $[[E]]\in KK^0(C(X), C(X)\otimes B)$ (see for example \cite[Section 3.4]{Rav}).
\par
A cycle in the Baum-Douglas model is a triple, $(M,E,f)$, where $M$ is a compact ${\rm spin^c}$-manifold, $E$ is a vector bundle over $M$, and $f:M\rightarrow X$ is a continuous map; we let $K^{geo}_*(X)$ denote the abelian group obtained from these cycles. More generally, given a unital C*-algebra, $B$, one can obtain a model for $KK^*(C(X), B)$ (denoted by $K^{geo}_*(X;B)$) by replacing the vector bundle $E$ with a $B$-bundle, see \cite{Wal} for details. The precise definition of the cycles used to define $K_*(X;\phi)$ is given in Definition \ref{phiCyc}, while the group is defined in Definition \ref{equCyc}. The topological index is denoted by ${\rm ind}_{top}$. Subscript notation is also used in the case of Dirac type operators to specify which manifold it is acting on and if it is twisted by a $B$-bundle.

\section{Review of the geometric model}
\noindent 
We review the constructions and main results of \cite{DeeGeoRelKhom}.
\begin{define}
Let $W$ be a locally compact space, $Z$ a closed subspace of $W$, and $\phi:B_1 \rightarrow B_2$ a unital $*$-homomorphism between unital $C^*$-algebras.  Then
$$C_0(W,Z;\phi):=\{ (f,g) \in C_0(W,B_2) \oplus C_0(Z,B_1) \: | \: f|_Z = \phi \circ g \}$$
\end{define}
We note that $C_0(W,Z;\phi)$ is a $C^*$-algebra; it fits into the following pullback diagram:
\begin{center}
$\begin{CD}
C_0(W,Z;\phi) @>>> C_0(Z,B_1) \\
@VVV  @V\phi_* VV \\
C_0(W,B_2) @>|_{Z}>> C_0(Z,B_2) \\ 
\end{CD}$
\end{center}
A prototypical example is the case when $W$ is a manifold with boundary and $Z=\partial W$.  In particular, the mapping cone of $\phi$ (denoted by $C_{\phi}$) is obtained by taking $W=[0,1)$ and $Z=pt$. The $K_0$-group of $C_0(W,Z;\phi)$ is denoted by $K^0(W,Z;\phi)$.  If $g:W \rightarrow W^{\prime}$ is a continuous map such that $g(Z) \subseteq Z^{\prime}$, then we obtain a $*$-homomorphism, $\tilde{g}:C_0(W^{\prime},Z^{\prime};\phi) \rightarrow C_0(W,Z;\phi)$ and hence a map at the level of $K$-theory groups.  We also have a $K^0(W)$-module structure on $K^0(W,Z;\phi)$ obtained via 
$$ g\cdot (f_W,f_Z) := (g\cdot f_W, g|_{Z} \cdot f_Z)$$
where $g\in C(W)$ and $(f_W,f_Z) \in C_0(W,Z;\phi)$. We will also make use of 
$$C_b(W,Z, \phi):=\{ (f,g) \in C_b(W,B_2) \oplus C_b(Z,B_1) \: | \: f|_Z = \phi \circ g \}$$
where $C_b(X,B)$ denotes the bounded $B$-valued function on $X$. Of course, if $W$ is compact, then $C_b(W,Z;\phi)=C_0(W,Z;\phi)$.
\begin{define} {\bf Cycles with vector bundle data} \label{cycBunDat} \cite[Definition 4.2]{DeeGeoRelKhom} \\ 
A cycle (over $X$ with respect to $\phi$ using bundle data) is given by, $(W, (E_{B_2},F_{B_1},\alpha),f)$, where 
\begin{enumerate}
\item $W$ is a smooth, compact ${\rm spin^c}$-manifold with boundary;
\item $E_{B_2}$ is a smooth $B_2$-bundle over $W$;
\item $F_{B_1}$ is a smooth $B_1$-bundle over $\partial W$;
\item $\alpha : E_{B_2}|_{\partial W} \cong \phi_*(F_{B_1}):=F_{B_1}\otimes_{\phi}B_2$ is an isomorphism of $B_2$-bundles;
\item $f:W \rightarrow X$ is a continuous map.
\end{enumerate}
\end{define}
\begin{define} {\bf Cycles with $K$-theory data} \cite[Definition 4.3]{DeeGeoRelKhom} \label{phiCyc} \\
A cycle (over $X$ with respect to $\phi$ using $K$-theory data) is a triple, $(W,\xi,f)$, where:
\begin{enumerate}
\item $W$ is a smooth, compact ${\rm spin^c}$-manifold with boundary;
\item $\xi \in K^0(W,\partial W;\phi)$;
\item $f:W \rightarrow X$ is a continuous map.
\end{enumerate}
\end{define}
The manifold, $W$, in a cycle need not be connected. We also let $\xi_{\partial W}$ and $\xi_{W}$ denote the images of $\xi$ under the maps $p_1:K^0(W,\partial W;\phi) \rightarrow K^0(\partial W;B_1)$ and $p_2:K^0(W,\partial W;\phi) \rightarrow K^0(W;B_2)$ respectively. The opposite of a cycle, $(W,\xi,f)$, is the same data except $W$ is given the opposite ${\rm spin^c}$-structure.  It is denote by $-(W,\xi,f)$.  The disjoint union of cycles, $(W,\xi,f)$ and $(\tilde{W},\tilde{\xi},\tilde{f})$ is given by the cycle $(W\dot{\cup}\tilde{W},\xi \dot{\cup} \tilde{\xi},f\dot{\cup} \tilde{f}) $. Two cycles, $(W,\xi,f)$ and $(\tilde{W},\tilde{\xi},\tilde{f})$ are isomorphic if there exists a diffeomorphism, $h:W \rightarrow \tilde{W}$ such that $h$ preserves the ${\rm spin^c}$-structure, $h^*(\tilde{\xi})=\xi$, and $\tilde{f} \circ h=f$.  Throughout, a ``cycle" more precisely refers to an isomorphism class of a cycle.
\begin{define} \label{phiBor}
A bordism (with respect to $X$ and $\phi$) is given by $(Z,W,\eta,g)$ where 
\begin{enumerate}
\item $Z$ is a compact ${\rm spin^c}$-manifold with boundary;
\item $W \subseteq \partial Z$ is a regular domain (see for example \cite[Definition 4.4]{DeeGeoRelKhom});
\item $\eta \in K^0(Z,\partial Z - {\rm int}(W);\phi)$;
\item $g:Z \rightarrow X$ is a continuous map.  
\end{enumerate}
\label{borBou}
The ``boundary" of a bordism, $(Z,W,\eta,F)$, is given by $(W,\eta|_W,g|_W)$. This notion of bordism leads to an equivalence relation which we denote by $\sim_{bor}$.
\end{define}
\begin{define}
Let $(W,\xi,f)$ be a cycle and $V$ a ${\rm spin^c}$-vector bundle of even rank over $W$.  Then, the vector bundle modification of $(W,\xi,f)$ by $V$ is defined to be $(W^V,\pi^*(\xi)\otimes_{\field{C}}\beta_V,f\circ \pi) $ where
\begin{enumerate}
\item ${\bf 1}$ is the trivial real line bundle over $W$ (i.e., $W\times \field{R}$);
\item $W^V=S(V\oplus {\bf 1})$ (i.e., the sphere bundle of $V\oplus {\bf 1}$);
\item $\beta_V$ is the ``Bott element" in $K^0(W^V)$ (see \cite[Section 2.5]{Rav});
\item $\otimes_{\field{C}}$ denotes the $K^0(W^V)$-module structure of $K^0(W^V,\partial W^V;\phi)$;
\item $\pi:W^V \rightarrow W$ is the bundle projection.
\end{enumerate}
The vector bundle modification of $(W,\xi,f)$ by $V$ is often denoted by $(W,\xi,f)^V$.
\end{define}  
\begin{define} \label{equCyc}
Let $\sim$ be the equivalence relation generated by bordism and vector bundle modification and let 
$$K_*(X;\phi)=\{ (W,\xi,f) \}/\sim$$
A cycle $(W,\xi,f)$ is even (resp. odd) if the connected components of $W$ are all even (resp. odd) dimensional.  Then, $K_0(X;\phi)$ is even cycles modulo $\sim$ and $K_1(X;\phi)$ is likewise only with odd cycles.
\end{define}
\begin{theorem} (see \cite[Proposition 4.13 and Theorem 4.19]{DeeGeoRelKhom}) \label{bockTypeSeq} \\
The set $K_*(X;\phi)$ with the operation of disjoint union is an abelian group. Moreover, if $X$ is a finite CW-complex, then the following sequence is exact: 
\begin{center}
$\begin{CD}
K_0(X;B_1) @>\phi_*>> K_0(X;B_2) @>r>> K_0(X;\phi) \\
@AA\delta A @. @VV\delta V \\
K_1(X;\phi) @<r<<  K_1(X;B_2) @<\phi_*<< K_1(X;B_1) 
\end{CD}$
\end{center}
where the maps are defined as follows:
\begin{enumerate}
\item $\phi_* : K_*(X;B_1) \rightarrow K_*(X;B_2)$ takes a cycle $(M,F_{B_1},f)$ to $(M,\phi_*(F_{B_1}),f)$. 
\item $r: K_*(X;B_2) \rightarrow K_*(X;\phi)$ takes a cycle $(M,E_{B_2},f)$ to $(M,(E_{B_2},\emptyset, \emptyset),f)$.  
\item $\delta : K_*(X;\phi) \rightarrow K_{*+1}(X;B_1)$ takes a cycle $(W,(E_{B_2},F_{B_1},\alpha),f)$ to $(\partial W,F_{B_1},f|_{\partial W})$.  
\end{enumerate}
\end{theorem}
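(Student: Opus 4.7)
The proof proceeds in two stages: first, verifying that $\phi_*$, $r$, and $\delta$ are well-defined on equivalence classes of cycles, and second, establishing exactness at each of the six positions. Well-definedness of $\phi_*$ and $r$ is immediate, since pushing forward a $B_1$-bordism or VBM by $\phi_*$ yields a $B_2$-bordism or VBM, and a $B_2$-cycle is a special case of a $\phi$-cycle (with empty boundary data). Well-definedness of $\delta$ is precisely the content of Remarks~\ref{phiBor} and~\ref{phiVBM}: the boundary of a $\phi$-bordism is a $B_1$-bordism, and VBM of a $\phi$-cycle restricts on $\partial W$ to VBM of the $B_1$-boundary cycle. By the $\field{Z}/2$-parity symmetry, exactness needs to be checked at only three of the six positions.

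At $K_*(X;B_2)$, one shows $r\circ\phi_*=0$ via the explicit $\phi$-nullbordism of $(M,(\phi_*F_{B_1},\emptyset,\emptyset),f)$ with underlying manifold $M\times[0,1]$, regular domain $M\times\{0\}$, and $K^0(M\times[0,1],M\times\{1\};\phi)$-class represented by $\pi^*\phi_*F_{B_1}$ paired with $F_{B_1}$ at $M\times\{1\}$ via the canonical identification. Conversely, given a $\phi$-nullbordism of $r[(M,E_{B_2},f)]$, its $B_1$-component on $\partial Z\setminus\mathrm{int}(M)$ defines a $B_1$-cycle whose $\phi_*$-image is $B_2$-bordant to $(M,E_{B_2},f)$ via the $B_2$-component on $Z$. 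At $K_*(X;B_1)$, the composition $\phi_*\circ\delta$ vanishes because $(W,E_{B_2},f)$ is itself a $B_2$-nullbordism of $(\partial W,E_{B_2}|_{\partial W},f|_{\partial W})\cong(\partial W,\phi_*F_{B_1},f|_{\partial W})$ via $\alpha$; conversely, any $B_2$-nullbordism $(Y,E,g)$ of $\phi_*[(M,F_{B_1},f)]$ promotes to the $\phi$-cycle $(Y,(E,F_{B_1},\mathrm{id}),g)$, whose $\delta$-image is $(M,F_{B_1},f)$.

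The principal step is exactness at $K_*(X;\phi)$. The inclusion $\mathrm{im}\,r\subseteq\ker\delta$ is immediate since closed manifolds have empty boundary. For the reverse, suppose $(W,(E_{B_2},F_{B_1},\alpha),f)$ satisfies $\delta[\cdot]=0$; after absorbing any needed VBMs into $W$ via collar extension, fix a literal $B_1$-nullbordism $(Y,\hat F_{B_1},g)$ with $\partial Y=\partial W$, $\hat F_{B_1}|_{\partial W}=F_{B_1}$, $g|_{\partial W}=f|_{\partial W}$. Form the closed $spin^c$-manifold $M:=W\cup_{\partial W}(-Y)$, equipped with the $B_2$-bundle $E_M:=E_{B_2}\cup_\alpha\phi_*\hat F_{B_1}$ and map $f_M:=f\cup g$. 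The identity $r[(M,E_M,f_M)]=[(W,\xi,f)]$ in $K_*(X;\phi)$ is then witnessed by the $\phi$-bordism with underlying manifold $M\times[0,1]$, regular domain $(W\times\{0\})\sqcup(M\times\{1\})$, and $K^0(M\times[0,1],(-Y)\times\{0\};\phi)$-class represented by $\pi^*E_M$ paired with $\hat F_{B_1}$ on $(-Y)\times\{0\}$ via the canonical identification arising from the construction of $E_M$; restriction to $W\times\{0\}$ returns $\xi=(E_{B_2},F_{B_1},\alpha)$ (with identification $\alpha$ inherited from the gluing), while restriction to $M\times\{1\}$ returns $(E_M,\emptyset,\emptyset)$, so the induced-orientation boundary of this bordism is $(M,(E_M,\emptyset,\emptyset),f_M)-(W,\xi,f)$.

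The main obstacle is the preliminary reduction in this last step. Since the relation on $B_1$-cycles is generated by bordism \emph{and} VBM, the hypothesis $\delta[\cdot]=0$ does not immediately yield a literal nullbordism; one must first pull any required boundary VBMs back into a collar in $W$ (using that an even-rank $spin^c$-vector bundle over $\partial W$ extends over a collar neighborhood) and perform the matching VBM on the $\phi$-cycle $(W,\xi,f)$. Once the boundary data is placed in purely-bordism form, the gluing construction runs without further difficulty, and all patching of the Hilbert $C^*$-module bundle data is governed by the pullback structure of $C^*(W,Z;\phi)$.
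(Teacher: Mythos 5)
The paper itself offers no proof of Theorem \ref{bockTypeSeq}: Section 2 is an explicit review of \cite{DeeGeoRelKhom}, and the theorem is imported from there. So there is no in-paper argument to compare against; I can only assess your proposal on its merits. The overall architecture is the expected one (well-definedness via Remarks \ref{phiBor} and \ref{phiVBM}, parity symmetry reducing to three positions, and explicit cylinder/gluing bordisms at each position), and the individual constructions you give -- in particular the witness bordism $M\times[0,1]$ with regular domain $(W\times\{0\})\sqcup(M\times\{1\})$ and class in $K^0(M\times[0,1],Y\times\{0\};\phi)$ for exactness at $K_*(X;\phi)$ -- are correct in outline.

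The step you yourself single out as the main obstacle is, however, not resolved by your proposed fix, and this is a genuine gap. Granting the normal-form lemma (which you invoke silently and which is itself a nontrivial transitivity statement about the relation generated by bordism and vector bundle modification), the hypothesis $\delta[(W,\xi,f)]=0$ yields an even-rank $spin^c$-bundle $V$ over $\partial W$ such that $(\partial W,\xi_{B_1},f|_{\partial W})^V$ literally bounds. To replace $(W,\xi,f)$ by an equivalent cycle whose $\delta$-image is that modified cycle, Remark \ref{phiVBM} requires a vector bundle modification of $(W,\xi,f)$ by a bundle defined on \emph{all} of $W$ and restricting to $V$ on $\partial W$. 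Extending $V$ over a collar $\partial W\times[0,1)$ does not produce such a bundle, and there is no notion of modifying only a collar: vector bundle modification changes the dimension of the underlying manifold, so it cannot be performed on part of $W$ and left undone elsewhere. The standard repair is to stabilize: choose $V^{\perp}$ with $V\oplus V^{\perp}$ trivial, verify (using the compatibility, up to bordism, of iterated modifications with direct sums) that $(\partial W,\xi_{B_1},f|_{\partial W})^{V\oplus V^{\perp}}$ still bounds, and observe that the trivial bundle does extend over $W$. The same implicit appeal to literal nullbordisms occurs in your converse arguments at $K_*(X;B_1)$ and $K_*(X;B_2)$; there it is harmless, since the needed modification lives on a closed manifold which is the entire cycle rather than only its boundary, but this should be said explicitly rather than left to the reader.
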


\section{An index map via the mapping cone and imbeddings}
Let $H^{n}=\{(x_1,\ldots,x_{n})\in \field{R}^{n}\: | \: x_n\ge 0\}$.  The next lemma is a consequence of Bott periodicity and the definitions of the objects involved; its proof is left to the reader. 
\begin{lemma}
Let $k\in \field{N}$ and $X$ a finite CW-complex. Then 
\begin{eqnarray*}
KK^0(C(X),C_0(H^{2k},\field{R}^{2k-1};\phi)) & \cong & KK^0(C(X),SC_{\phi}) \\
KK^0(C(X),C_0(H^{2k+1},\field{R}^{2k};\phi)) & \cong & KK^0(C(X),C_{\phi}) 
\end{eqnarray*}
In particular, $K^0(H^{2k},\field{R}^{2k-1};\phi)\cong K^0(SC_{\phi})$ and $K^0(H^{2k+1},\field{R}^{2k};\phi)\cong K^0(C_{\phi})$.
\end{lemma}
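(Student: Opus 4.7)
The plan is to identify $C^*(H^n,\field{R}^{n-1};\phi)$ with an iterated suspension of the mapping cone $C_{\phi}$ and then invoke Bott periodicity. Writing $H^n \cong \field{R}^{n-1}\times [0,\infty)$ so that $\field{R}^{n-1}$ is the boundary slice at height $0$, an element of $C^*(H^n,\field{R}^{n-1};\phi)$ is a pair $(f,g)$ with $f\in C_0(\field{R}^{n-1}\times [0,\infty),B_2)$, $g\in C_0(\field{R}^{n-1},B_1)$, and $f(x,0)=\phi(g(x))$. Packaging this data fiberwise as $x\mapsto (f(x,\cdot),g(x))$ produces a continuous, vanishing-at-infinity map from $\field{R}^{n-1}$ into $C_{\phi}$. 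I would show that this assignment extends to a $*$-isomorphism
$$C^*(H^n,\field{R}^{n-1};\phi) \;\cong\; C_0(\field{R}^{n-1})\otimes C_{\phi} \;=\; S^{n-1}C_{\phi},$$
the inverse being obtained by unbundling the fibers. Well-definedness and the $*$-algebra structure are immediate from the definitions, and the compatibility of the pullback square defining $C^*(H^n,\field{R}^{n-1};\phi)$ with the tensor product $C_0(\field{R}^{n-1})\otimes(-)$ makes this essentially automatic.

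With that identification in hand, Bott periodicity in $KK$-theory (applied in the second variable), $KK^0(C(X),S^2A)\cong KK^0(C(X),A)$ for every separable $C^*$-algebra $A$, iterates to give
$$KK^0(C(X),S^{2k-1}C_{\phi}) \cong KK^0(C(X),SC_{\phi})$$
and
$$KK^0(C(X),S^{2k}C_{\phi}) \cong KK^0(C(X),C_{\phi}),$$
which match the two cases $n=2k$ and $n=2k+1$ of the lemma. The $K$-theory statement at the end is the special case $X=\text{pt}$, using $K^0(A)=KK^0(\field{C},A)$.

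The proof has no substantive obstacle; the only non-trivial step is the bookkeeping check in the first part, and even there, once the product decomposition $H^n=\field{R}^{n-1}\times[0,\infty)$ is fixed and one recalls the convention $C_{\phi}=C^*([0,1),\mathrm{pt};\phi)$ (with $[0,1)$ and $[0,\infty)$ homeomorphic as pointed spaces at the basepoint $0$), the $*$-isomorphism is forced by the universal property of the defining pullback square. This is exactly the reason the author deems the lemma safe to leave to the reader.
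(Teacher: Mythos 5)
Your proof is correct and is exactly the argument the paper has in mind when it says the lemma "is a consequence of Bott periodicity and the definitions of the objects involved": the identification $C^*(H^n,\field{R}^{n-1};\phi)\cong C_0(\field{R}^{n-1})\otimes C_{\phi}$ followed by iterated Bott periodicity in the second variable. The fiberwise repackaging and the parity bookkeeping both check out, so nothing is missing.
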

\begin{define}
Let $W$ and $W^{\prime}$ be ${\rm spin^c}$-manifolds with boundary with dimensions equal modulo two and $i:W \rightarrow W^{\prime}$ a $K$-oriented neat embedding.  The push-forward map induced by $i$ (denoted $i!$) is given by the composition of the Thom isomorphism and the map given by identifying the normal bundle associated with $i$ with a neighbourhood of $W^{\prime}$.  Thus, the push-forward of $i$ defines a map
$$i!: K^0(W,\partial W;\phi) \rightarrow K^0(W^{\prime},\partial W^{\prime};\phi)$$
\end{define}
This map has two important properties. Firstly, as a map from cocycles of the form, $(E_{B_2},F_{B_1},\alpha)$ (see Definition \ref{cycBunDat} and \cite{DeeGeoRelKhom}) to $K$-theory classes, it is given as follows:
\begin{eqnarray}
i!(E_{B_2},F_{B_1},\alpha) & \mapsto & [((\pi_W)^*(E_{B_2})\otimes \beta_W, (\pi_{\partial W})^*(F_{B_1})\otimes \beta_{\partial W}, \tilde{\alpha}\otimes id)] \\
& & - [((\pi_W)^*(E_{B_2})\otimes \tilde{\beta}_W, (\pi_{\partial W})^*(F_{B_1})\otimes \tilde{\beta}_{\partial W} , \tilde{\alpha}\otimes id)]  \label{pushForActCocyc}
\end{eqnarray}
where 
\begin{enumerate}
\item $\pi_W$ (resp. $\pi_{\partial W}$) is the projection map from the normal bundle (resp. normal bundle restricted to the boundary) to $W$ (resp. $\partial W$);
\item $[\beta_W]-[\tilde{\beta}_W]$ is the Thom class of a normal bundle of $W$ inside $W^{\prime}$ and $\beta_{\partial W}$ (resp. $\tilde{\beta}_{\partial W}$) is the restriction of $\beta_W$ (resp. $\tilde{\beta}_W$) to the boundary. The reader should note that the bundles which form the Thom class are not unique, but the resulting $K$-theory class (i.e., the image of the map $i!$) is unique;
\item $\tilde{\alpha}$ is the isomorphism from $(\pi_W)^*(E_{B_2})|_{\partial W^{\prime}}$ to $(\pi_{\partial W})^*(F_{B_1})\otimes_{\phi} B_2$ given by 
$$(w,e) \mapsto (w, \alpha(e))$$
Notice that the range of this map is, in fact, $(\pi_{\partial W})^*(F_{B_1}\otimes_{\phi} B_2)$.  However, this bundle can be identified with $(\pi_{\partial W})^*(F_{B_1})\otimes_{\phi} B_2$; 
\end{enumerate}
Secondly, the map can be realized via the Kasparov product with an element in $KK^0(C_0(W,\partial W;\phi),C_0(W^{\prime},\partial W^{\prime};\phi))$.  The construction of this element is as follows.  Let $\nu_{W}$ be a normal bundle for $i(W)\subseteq W^{\prime}$.  Then,
\begin{equation}
i!:= ((\beta \otimes_{\field{C}} [\tilde{\pi}]) \otimes_{C_0(\nu_W) \otimes C_0(\nu_W,\partial \nu_W ;\phi)} [\iota]) \otimes_{C_0(\nu_W,\partial \nu_W;\phi)} [\theta]  \label{pushForwardRel}
\end{equation}
where 
\begin{enumerate}
\item $\beta \in KK(\field{C},C_0(\nu_{W}))$ is the Thom class.  It is defined in \cite[Appendix 4]{BOOSW}; note that we are using the $K$-theory class rather than the class in $KK(C(W),C_0(\nu_W))$.
\item $[\tilde{\pi}] \in KK(C_0(W,\partial W; \phi),C_b(\nu_W,\partial \nu_W;\phi))$ is the $KK$-theory class obtained from the $*$-homomorphism $\tilde{\pi}:C_0(W,\partial W; \phi) \rightarrow C_b(\nu_W,\partial \nu_W;\phi)$ defined via $(f_W,g_W) \mapsto (f_W\circ \pi, g_W\circ \pi|_{\partial \nu_W})$ where $\pi:\nu_W \rightarrow W$ is the bundle projection.
\item $[\iota] \in KK(C_0(\nu_W)\otimes C_b(\nu_W,\partial \nu_W;\phi),C_0(\nu_W,\partial \nu_W;\phi))$ is the $KK$-theory class obtained from the $*$-homomorphism 
$$\iota: C_0(\nu_W) \otimes C_b(\nu_W,\partial \nu_W;\phi) \rightarrow C_0(\nu_W,\partial \nu_W;\phi)$$ 
defined via $h \otimes(f_{\nu_W},g_{\partial \nu_W}) \mapsto (h\cdot f_{\nu_W},h|_{\partial \nu_W}\cdot g_{\partial \nu_W})$; here $\cdot$ denotes pointwise multiplication.
\item $[\theta] \in KK(C_0(\nu_W,\partial \nu_W;\phi),C_0(W^{\prime},\partial W^{\prime};\phi))$ is the $KK$-theory class obtained from the $*$-homomorphism $\theta: C_0(\nu_W,\partial \nu_W;\phi) \rightarrow C_0(W^{\prime},\partial W^{\prime};\phi)$ given by extension by zero.
\end{enumerate}
The reader familiar with pullbacks for $C^*$-algebras will notice that the definitions of the $*$-homomorphisms above (e.g., $\tilde{\pi}$, $\iota$, and $\theta$) are obtained naturally from the fact that the $C^*$-algebras involved are pullbacks.  We will often suppress the algebras over which the Kasparov products are taken and use subscript notation when more than one push-forward map is required. In this notation, Equation \ref{pushForwardRel} takes the form
$$i!= (\beta_{\nu_W})\otimes [\tilde{\pi}_{\nu_W}] \otimes [\iota_{\nu_W}] \otimes [\theta_{\nu_W}] $$
\begin{prop} 
\label{KKTrig}
Let $i:W \hookrightarrow W^{\prime}$ be a neat embedding.  Then, the map $i!$ is given by taking the Kasparov product with the class, $[i!]$.  Moreover, $i!$ fits into the following commutative diagram:
\begin{center}
$\minCDarrowwidth12pt\begin{CD}
@>>> K^0(W,\partial W;\phi) @>>> K^0(W;B_2)\oplus K^0(\partial W;B_1) @>r_W>> K^0(\partial W;B_2) @>>> \\
@. @Vi! VV @V i_W!\oplus i_{\partial W}! VV @V i_{\partial W}! VV @. \\
@>>> K^0(W,\partial W^{\prime};\phi) @>>> K^0(W;B_2)\oplus K^0(W^{\prime};B_1) @>r_{W^{\prime}}>>  K^0(\partial W^{\prime};B_2) @>>> 
\end{CD}$
\end{center}
The horizontal morphisms are given by $KK$-classes associated to the following $*$-homomorphisms: 
\begin{enumerate}
\item $C_0(W,\partial W;\phi) \rightarrow C_0(W,B_2)$ defined via $(f,g) \mapsto f$;
\item $C_0(W,\partial W;\phi) \rightarrow C_0(W,B_2)$ defined via $(f,g) \mapsto g$;
\item $C_0(W,B_2) \rightarrow C_0(\partial W,B_2)$ defined via $f \mapsto f|_{\partial W}$;
\item $C_0(\partial W,B_1) \rightarrow C_0(\partial W,B_2)$ defined via $f \mapsto \phi \circ f$.
\end{enumerate}
The vertical morphisms are given by the standard push-forward classes in $KK$-theory.
\end{prop}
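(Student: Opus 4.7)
The plan is to treat the two assertions of Proposition \ref{KKTrig} separately. For the first, that $i!$ is realized by Kasparov product with the class defined in equation \ref{pushForwardRel}, I would unpack the product factor by factor, starting from a cocycle $(E_{B_2},F_{B_1},\alpha)$ representing a class in $K^0(W,\partial W;\phi)$ (as in Definition \ref{cycBunDat}). Taking the product with $[\tilde{\pi}]$ pulls the cocycle back along $\pi:\nu_W\to W$ to the pair $(\pi_W^*E_{B_2},\pi_{\partial W}^*F_{B_1},\tilde{\alpha})$ on $\nu_W$ and $\partial\nu_W$. External multiplication by the Thom class $\beta \in KK(\mathbb{C},C_0(\nu_W))$, followed by the product with $[\iota]$ (which converts the external $C_0(\nu_W)$-action into the pointwise module action on the pullback algebra), produces exactly the formal difference of pairs exhibited in equation \ref{pushForActCocyc}, only living on $\nu_W$ rather than on $W^{\prime}$. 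Finally, Kasparov product with $[\theta]$, i.e.\ extension by zero from $\nu_W$ (identified with a tubular neighborhood of $i(W) \subset W^{\prime}$) out to $W^{\prime}$, completes the identification with the cocycle formula.

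For the commutative diagram, I would argue by naturality of the Kasparov product, combined with the observation that the rows are three consecutive terms of the Mayer--Vietoris sequence associated to the pullback diagram defining $C^*(W,\partial W;\phi)$ and $C^*(W^{\prime},\partial W^{\prime};\phi)$ respectively. The horizontal morphisms come from the $*$-homomorphisms listed in (1)--(4), each of which commutes on the nose with the composition $\tilde{\pi}\circ\iota\circ\theta$ entering the definition of $[i!]$. Thus the left square reduces to the assertion that projection to the $K^0(W;B_2)$ and $K^0(\partial W;B_1)$ components commutes with Kasparov product with $[i!]$; the middle square is then essentially formal, stating that push-forward on a direct sum decomposes as a direct sum of push-forwards. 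For the right square, one needs $i_W!$ to intertwine restriction-to-$\partial W$ with $i_{\partial W}!$, and $i_{\partial W}!$ to intertwine with $\phi_*$. The first is the standard compatibility of neat-embedding push-forward with boundary restriction, since the normal bundle to $\partial W$ in $\partial W^{\prime}$ is precisely the restriction of the normal bundle to $W$ in $W^{\prime}$. The second is naturality: the Thom-class construction involves no $B_1$- or $B_2$-dependent data, so applying $\phi_*$ to the coefficient algebra commutes with the push-forward at the level of $KK$.

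The main obstacle I anticipate is the bookkeeping in the first assertion. Keeping track simultaneously of the $B_2$-component, the $B_1$-boundary component, and the compatibility isomorphism $\tilde{\alpha}$ as they are transported through each of the three Kasparov products requires careful identification of how the restriction $E_{B_2}|_{\partial W}\cong \phi_*(F_{B_1})$ transforms after twisting by the Thom-class bundles $\beta_W$ and $\beta_{\partial W}$, which live on different spaces and must be restricted compatibly. The pullback structure of $C^*(\cdot,\cdot;\phi)$ makes the relevant identifications natural, but verifying that equation \ref{pushForActCocyc} is reproduced term for term by the Kasparov product is the one place where genuine computation is needed; everything else reduces to standard associativity and naturality properties of the Kasparov product.
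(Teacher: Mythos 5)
Your proposal is correct and follows essentially the same route as the paper: the paper likewise verifies the first claim by writing down the Kasparov cycle attached to a cocycle $(E_{B_2},F_{B_1},\alpha)$ and checking that its product with $[i!]$ reproduces the cocycle formula of Equation \ref{pushForActCocyc}, and it deduces the commutativity of the diagram directly from that cocycle-level formula, exactly as you do via the restriction of the normal bundle to the boundary and the coefficient-independence of the Thom class. Your factor-by-factor unpacking of $\beta\otimes[\tilde{\pi}]\otimes[\iota]\otimes[\theta]$ is precisely the computation the paper leaves to the reader.
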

\begin{proof}
For the proof of the first statement in the theorem, let $(E_{B_2},F_{B_1},\alpha)$ be a cocycle and let $\Gamma(M;E_A)$ denote the continous section of $E_A$ where $E_A$ is a $A$-bundle over $M$.  In this notation, the Kasparov cycle associated to $(E_{B_2},F_{B_1},\alpha)$ is given by $\xi=(\mathcal{E},\rho,0)$ where
$$ \mathcal{E}= \{ (s_W,s_{\partial W})\in \Gamma(W;E_{B_2})\oplus \Gamma(\partial W;F_{B_1}) \: | \: (s_W)|_{\partial W} = \alpha \circ (s_{\partial W} \otimes Id_{B_2})\}$$
and $\rho$ is the unital inclusion of the complex number.  The product $\xi \otimes_{C_0(W,\partial W;\phi)} [i!]$ can be explicitly computed and (as the reader can verify) is equal to the Kasparov cycle associated to the $i!(E_{B_2},F_{B_1},\alpha)$.
\par
The second statement follows from the action of $i!$ on cocycles of the form, $(E_{B_2},F_{B_1},\alpha)$, discussed above (see Equation \ref{pushForActCocyc}).   
\end{proof}
Our goal is the definition of a map, $\lambda: K_*(X;\phi) \rightarrow KK^*(C(X),SC_{\phi})$. For the even case, given a cycle $(W,\xi,f)$ in $K_0(X;\phi)$, there exists (for $k$ sufficiently large) a $K$-oriented neat embedding, $i:W \rightarrow H^{2k}$ and associated $KK$-theory element $[i!]\in KK(C_0(W,\partial W;\phi), C_0(H^{2k},\field{R}^{2k-1};\phi))$.  There are also $KK$-elements associated to $\xi$ and $f:W\rightarrow X$; namely 
\begin{enumerate}
\item $[[\xi]]:=\xi \otimes [\iota_W] \in KK(C(W),C_0(W,\partial W;\phi))$ where $[\iota_W]$ is the $KK$-theory class obtained from the $*$-homomorphism 
$$\iota: C(W) \otimes C_0(W,\partial W;\phi) \rightarrow C_0(W,\partial W;\phi)$$ 
defined via $h \otimes(f_{W},g_{\partial W}) \mapsto (h\cdot f_{W},h|_{\partial W}\cdot g_{\partial W})$; we often denote $[\iota_W]$ by $[\iota]$;
\item $[f]\in KK(C(X),C(W))$ is the $KK$-element naturally associated to the $*$-homomorphism $\tilde{f}: C(X) \rightarrow C(W)$ induced from $f$ (i.e., $\tilde{f}(g):=g\circ f$);
\end{enumerate}
Combining these three $KK$-theory elements gives the desired map.  More precisely, we have the following definition. 
\begin{define}
Let $\lambda: K_0(X;\phi) \rightarrow KK^0(C(X),SC_{\phi})$ be the map defined at the level of cycles via  
$$\lambda(W,\xi,f) := [f] \otimes_{C(W)} [[\xi]] \otimes_{C_0(W,\partial W;\phi)} [i!] \otimes_{C_0(H^{2k},\field{R}^{2k-1};\phi)} \mathcal{B} $$
where $\mathcal{B}$ denotes the $KK$-theory class which gives the map 
$$KK(C(X),C_0(H^{2k},\field{R}^{2k-1};\phi))\cong KK(C(X),C_0(H^{2},\field{R};\phi))=KK(C(X),SC_{\phi})$$ 
obtained via Bott periodicity.  The map from $K_1(X;\phi)$ to $KK(C(X),C_{\phi})$ is defined in a similar way; one uses a neat embedding into $H^{2k+1}$ (for $k$ sufficiently large).  Since Bott periodicity is a natural isomorphism, we often omit the map induced from $\mathcal{B}$.
\end{define}
A proof that the map $\lambda$ is well-defined is required.  It is standard to show that the map is well-defined at the level of cycles (i.e., independent of the choice of embedding, normal bundle, etc).  That it respects the equivalence relation used to define $K_*(X;\phi)$ is more involved. 
\par
In particular, further notation and three lemmas are required.  The first two lemmas are based on \cite[Lemmas 3.5 and 3.6]{BOOSW} (the proof of the latter is in Appendix B.2 of \cite{BOOSW}).  As such, the proofs of the lemmas stated here are similar to those for these lemmas.  The final lemma concerns functorial properties of the push-forward.  Again, the proofs is similar to the standard case. The fact that the maps are embeddings simplifies the proofs of these lemmas.  
\begin{lemma}  Let $(W,\xi,f)$ be a cycle in $K_*(X;\phi)$ and $V$ an even rank spin$^c$-vector bundle over $W$.  Also let $s:W \rightarrow  S(V\oplus {\bf 1})$ be the north-pole section of $W$ into $S(V\oplus {\bf 1})$ (i.e., $s(w):=(z(m),1)\in S(V\oplus {\bf 1})$ where $z$ is the zero section). Then, 
$$(W,\xi, f)^V = (S(V\oplus {\bf 1}),s!(\xi),f\circ \pi)$$
\end{lemma}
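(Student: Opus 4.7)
Both sides of the claimed equality share the same manifold $W^V = S(V \oplus {\bf 1})$ and the same reference map $f \circ \pi$, so the statement reduces to the identity $s!(\xi) = \pi^*(\xi) \otimes_{\field{C}} \beta_V$ in $K^0(W^V, \partial W^V; \phi)$. First, note that the north-pole section $s$ is a $K$-oriented neat embedding: it meets $\partial W^V = S(V|_{\partial W} \oplus {\bf 1})$ exactly in $s(\partial W)$ and does so transversally, while the $K$-orientation is inherited from the $spin^c$ structures on $W$, $V$, and $W^V$. The normal bundle of $s(W)$ in $W^V$ is canonically isomorphic to $V$, since the vertical tangent space at the north pole of $S(V_w \oplus {\bf 1})$ is $V_w$; one may therefore choose a tubular neighborhood of $s(W)$ that identifies with $\nu_W \cong V$, on which the bundle projection $\pi: W^V \to W$ restricts to the normal bundle projection $\pi_\nu$.

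The main step is the identification of the Bott element $\beta_V \in K^0(W^V)$ with the extension by zero of the Thom class $\tau_V \in K^0(\nu_W)$. This is essentially the defining property of $\beta_V$ recalled in \cite[Section 2.5]{Rav}: the collapse map realizes $W^V / s(W)$ as the Thom space of $V$, and $\beta_V$ is the image of $\tau_V$ under the induced map on reduced $K$-theory. This is the principal subtlety of the argument; once it is established, the remainder of the proof is essentially bookkeeping.

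To conclude, apply the cocycle formula (\ref{pushForActCocyc}) for $s!$ to a representative $\xi = (E_{B_2}, F_{B_1}, \alpha)$. The formula produces a Thom-twisted cocycle on $\nu_W$ of the form $(\pi_\nu^* E_{B_2} \otimes \tau_V, \pi_{\partial \nu}^* F_{B_1} \otimes \tau_V|_{\partial \nu_W}, \tilde{\alpha} \otimes id)$, extended by zero to $W^V$. Using that $\pi_\nu = \pi|_{\nu_W}$ under the tubular neighborhood identification, and that extension by zero is compatible with $K$-theory products when one factor is supported in the tube, this cocycle coincides with $(\pi^* E_{B_2} \otimes \beta_V, \pi|_{\partial W^V}^* F_{B_1} \otimes \beta_V|_{\partial W^V}, \pi^* \alpha \otimes id)$, which is precisely the representative of $\pi^*(\xi) \otimes_{\field{C}} \beta_V$ dictated by the $K^0(W^V)$-module structure. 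Compatibility at the boundary uses that $\partial \nu_W \cong \nu_W|_{\partial W}$ is a tubular neighborhood of $s(\partial W)$ in $\partial W^V$ and that restriction commutes with extension by zero.
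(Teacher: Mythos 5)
Your argument is essentially the paper's: both proofs come down to the two facts that $V$ is a normal bundle for the north-pole section $s(W)\subseteq S(V\oplus{\bf 1})$ and that $\beta_V$ is, up to a trivial summand, the extension by zero of the Thom class of $V$ from a tubular neighbourhood of $s(W)$; the paper merely packages the remaining bookkeeping as an identity of $*$-homomorphisms, $\iota_Z\circ(\mathrm{id}\otimes\tilde{\pi}_Z)\circ(\varphi\otimes\mathrm{id})=\theta_V\circ\iota_V\circ(\mathrm{id}\otimes\tilde{\pi}_V)$, followed by a Kasparov-product computation, rather than comparing cocycle representatives directly. One correction to your justification of what you rightly call the principal subtlety: the Thom space of $V$ is not $W^V/s(W)$ but $W^V$ with the \emph{complement} of a tubular neighbourhood of $s(W)$ collapsed (equivalently, $W^V$ modulo the opposite-pole section $s_-(W)$); classes pulled back from $W^V/s(W)$ restrict trivially to $s(W)$, whereas $\beta_V$ does not. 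The fact you actually use afterwards --- that $\beta_V$ modulo a trivial bundle is $\tau_V$ extended by zero from the open set $W^V\setminus s_-(W)\cong V$ containing $s(W)$ as its zero section --- is correct and is exactly the content of the paper's displayed identity of $*$-homomorphisms.
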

\begin{proof}
Denote $S(V\oplus {\bf 1})$ by $Z$.  The vector bundle, $V$, gives a normal bundle of $s(W) \subseteq Z$.  Therefore, 
$$s!=([F_V]-[F^{\infty}_V]) \otimes [\tilde{\pi}_V] \otimes [\iota_V] \otimes [\theta_V]$$
where $F_V$ and $F^{\infty}_V$ are the vector bundle used to define the Thom isomorphism (see \cite[Proposition A.10]{BOOSW} for details).  \par
The $K$-theory class associated to the cycle $(W,\xi,f)^V$ is given by 
\begin{eqnarray*}
\pi^*_Z(\xi)\cdot ([F_Z]-[F^{\infty}_Z]) & = & \xi \otimes [\tilde{\pi}_Z] \otimes ([F_Z]-[F^{\infty}_Z]) \otimes [\iota_Z] \\
& = & \xi \otimes ([F_Z]-[F^{\infty}_Z]) \otimes [\tilde{\pi}_Z] \otimes [\iota_Z] \\
& = & \xi \otimes ([F_V]-[F^{\infty}_V])\otimes [\varphi] \otimes [\tilde{\pi}_Z] \otimes [\iota_Z]
\end{eqnarray*}
where $\varphi: C_0(V) \rightarrow C(Z)$ is the natural inclusion.  The reader can check that
$$ \iota_Z \circ (id \otimes \tilde{\pi}_Z) \circ (\varphi \otimes id)= \theta_V \circ \iota_V \circ (id \otimes \tilde{\pi}_V)$$
as $*$-homomorphisms from $C_0(V)\otimes C_0(W,\partial W;\phi)$ to $C_0(Z,\partial Z;\phi)$.  The equality of these $*$-homomorphisms implies that 
$$[\varphi] \otimes [\tilde{\pi}_Z] \otimes [\iota_Z]=[\tilde{\pi}_V] \otimes [\iota_V] \otimes [\theta_V]$$
This implies the result.
\end{proof}
\begin{lemma} \label{lemUsingBounded}
Let $W$ and $W^{\prime}$ be smooth, compact $spin^c$-manifolds with boundary, $i:(W,\partial W) \rightarrow (W^{\prime},\partial W^{\prime})$ be a neat embedding and $\xi \in K^0(W,\partial W;\phi)$.  Then,
$$[[(\xi \otimes_{C_0(W,\partial W;\phi)} [i!])]]=[i] \otimes_{C(W)} [[\xi]] \otimes_{C_0(W,\partial W;\phi)} [i!]$$
\end{lemma}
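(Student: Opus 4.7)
The asserted identity is the projection formula for the push-forward $[i!]$: it says that $[i!]$ intertwines the $C(W)$-module structure on $K^0(W,\partial W;\phi)$ (encoded by $[\iota_W]$) with the $C(W')$-module structure on $K^0(W',\partial W';\phi)$ (encoded by $[\iota_{W'}]$), where $C(W')$ acts on the former via the pullback $[i]\in KK(C(W'),C(W))$. After inserting the implicit identities on $C(W')$, both sides of the claimed equation live in $KK(C(W'),C^*(W',\partial W';\phi))$.

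My plan is to expand $[i!]$ on each side using the explicit presentation
$$[i!]=\beta_{\nu_W}\otimes[\tilde{\pi}_{\nu_W}]\otimes[\iota_{\nu_W}]\otimes[\theta_{\nu_W}]$$
from Equation (\ref{pushForwardRel}) and then use associativity and functoriality of the Kasparov product to rewrite each side into a common canonical form. Because the factors $[\tilde{\pi}_{\nu_W}]$ and $[\theta_{\nu_W}]$ come from honest $*$-homomorphisms, this reduces, just as in the preceding lemma, to the verification of a single identity of $*$-homomorphisms. Concretely, multiplying an element supported on $\nu_W\hookrightarrow W'$ by $g\in C(W')$ (i.e.\ applying $\iota_{W'}$ after extending by zero) should equal first restricting $g$ along $i\circ\pi_{\nu_W}$ to a function on $\nu_W$ and then multiplying inside the normal bundle (i.e.\ applying $\iota_{\nu_W}$, coming from the $\iota_W$ composed with $[\tilde\pi_{\nu_W}]$, before pushing forward). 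Once this pointwise identity is in hand, the remaining Thom factor $\beta_{\nu_W}$ passes through trivially, since it acts on a disjoint tensor slot from $C(W')$, and associativity of the Kasparov product delivers the stated equality.

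The main obstacle I foresee is the careful bookkeeping of the boundary components: the $*$-homomorphisms defining $[\iota_W]$, $[\iota_{W'}]$, $[\iota_{\nu_W}]$ and $[\theta_{\nu_W}]$ all involve pairs consisting of a bundle section and its restriction to the boundary, and the identity above must hold simultaneously on both slots. Here the hypothesis that $i$ is a \emph{neat} embedding is essential, since neatness guarantees that $\nu_W|_{\partial W}$ is a normal bundle of $\partial W$ inside $\partial W'$ and that $\theta_{\nu_W}$ intertwines the boundary and interior extensions by zero compatibly. With this compatibility in hand, the boundary half of the comparison reduces to exactly the same verification as the interior half, and the rest of the argument is a routine manipulation of Kasparov products in the spirit of the preceding lemma.
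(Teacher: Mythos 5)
You have correctly identified the statement as a projection formula, and your overall strategy (expand $[i!]$ as in Equation \ref{pushForwardRel} and push the $C(W^{\prime})$-action through the factors) matches the paper's setup. However, the key step fails as stated: the two $*$-homomorphisms you propose to compare are \emph{not} equal. Identifying the normal bundle $\nu_W$ with a tubular neighbourhood of $i(W)$ in $W^{\prime}$, your left-hand map sends $g\otimes(f,h)$ to the function $v\mapsto g(v)f(v)$ on $\nu_W$ (restriction of $g$ to the tubular neighbourhood, then multiplication), while your right-hand map sends it to $v\mapsto g(i(\pi_{\nu_W}(v)))f(v)$; these agree only on the zero section. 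This is a genuine difference from the preceding lemma on vector bundle modification, where the function being multiplied already lies in $C_0(V)\subseteq C(Z)$ and the compatibility $\pi_Z|_V=\pi_V$ makes the corresponding identity of $*$-homomorphisms hold on the nose. Here the equality holds only up to homotopy, and supplying that homotopy is the actual content of the proof; your argument as written skips it.

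The paper's proof (following Appendix B.2 of \cite{BOOSW}) does exactly this. After identifying the Hilbert modules underlying both sides with a fixed module $E$ that carries an action $\sigma$ of $C_b(\nu_W)$ by multipliers commuting with the operator $T$, one uses the fibrewise scaling $h:\nu_W\times[0,1]\rightarrow\nu_W$, $(x,t)\mapsto tx$, to produce a path $\rho_t$ of representations of $C(W^{\prime})$ interpolating between ``restrict to the tubular neighbourhood and multiply'' and ``restrict to $W$, pull back along $\pi_{\nu_W}$, and multiply''; the triples $(E,\rho_t,T)$ then form an operator homotopy of Kasparov cycles, which is what actually proves the lemma. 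To repair your argument you would need to add this homotopy and verify that $T$ commutes (modulo compacts) with the multiplier action of $C_b(\nu_W)$, so that each $(E,\rho_t,T)$ is a genuine Kasparov cycle; the boundary bookkeeping you flag is then handled as you describe, using neatness of $i$.
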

\begin{proof}
The reader shoud recall that by definition
$$[[\xi]]=[\iota_W(\xi)]=[\xi]\otimes_{C_0(W,\partial W;\phi)} [\iota_W]$$
As such, we must show that
$$(\xi \otimes_{C_0(W,\partial W;\phi)} [i!])\otimes_{C_0(W^{\prime},\partial W^{\prime};\phi)} [\iota_{W^{\prime}}]=[i] \otimes_{C(W)} ([\xi]\otimes_{C_0(W,\partial W;\phi)} [\iota_W]) \otimes_{C_0(W,\partial W;\phi)} [i!]$$
\par
Let $p:\field{C} \rightarrow C(W)$ denote the $*$-homomorphism defined via $\lambda\in \field{C} \mapsto \lambda\cdot 1_W$.  It follows from the commutivity of the Kasparov product over $\field{C}$ and direct calculation that
$$\xi  = [p]\otimes \iota_W(\xi) $$
where $[p]\in KK^0(\field{C},C(W))$ is the $KK$-class associated to $p$. 
\par
Thus, $\iota_{W^{\prime}}(\xi \otimes i!)=\iota_{W^{\prime}}([p]\otimes \iota_W(\xi) \otimes i!)$. It follows that if $\iota_W(\xi) \otimes i!=(E,\rho,T)$, then $\iota_{W^{\prime}}(\xi \otimes i!)=(E,\rho^{\prime},T)$ where $\rho^{\prime}$ is the composition of the inclusion $C(W^{\prime}) \rightarrow C_0(W^{\prime},\partial W^{\prime};\phi)$ and right action of $C_0(W^{\prime},\partial W^{\prime};\phi)$. 
\par
The details are as follows.  The Hilbert module in the $KK$-cycle $\iota_{W^{\prime}}([p]\otimes \iota_W(\xi) \otimes i!)$ is given by 
$$(C(W)\otimes C(W^{\prime})) \otimes_{C(W)\otimes C(W^{\prime})} (E \otimes_{\field{C}} C(W^{\prime})) \otimes_{\iota_{W^{\prime}}} C_0(W^{\prime},\partial W^{\prime};\phi)$$
As the reader can verify, the map defined on elementary tensors via 
$$f_W \otimes g_{W^{\prime}} \otimes e \otimes h_{W^{\prime}} \otimes a \mapsto f_W \cdot e \cdot (g_{W^{\prime}}h_{W^{\prime}}a)$$
gives a Hilbert $C_0(W^{\prime},\partial W^{\prime};\phi)$-module isomorphism to $E$.  Moreover, the representation of $C(W^{\prime})$ on $E$ is the composition of the inclusion $C(W^{\prime}) \rightarrow C_0(W^{\prime},\partial W^{\prime};\phi)$ and right action of $C_0(W^{\prime},\partial W^{\prime};\phi)$.  The operator $T$ in the original Kasparov cycle for $\iota_W(\xi) \otimes i!$ also respects this Hilbert module isomorphism.
\par
To proceed further, additional notation is required. Given a locally compact space $Y$ and $C^*$-algebra $A$, let $C_b(Y;A)$ be the continuous bounded $A$-valued functions on $Y$ and 
$$C_b(\nu_W,\partial \nu_W;\phi):= \{ (f,g)\in C_b(\nu_W;B)\oplus C_b(\partial \nu_W;A) | f|_{\partial \nu_W} =\phi \circ g \}$$
Let $\pi_{\nu_W}:\nu_W \rightarrow W$ denote the projection map and $\rho_0:C(W) \rightarrow C_b(\nu_W)$ denote the $*$-homomorphism given by $f \mapsto f\circ \pi_{\nu_W}$. 
\par
Using the definition of $i!$, the class in $KK$-theory, $\xi \otimes [\iota_W] \otimes i!$, can be represented by a Kasparov cycle, $(E,\rho,T)$, with the following properties:
\begin{enumerate}
\item $E$ is a Hilbert $C_0(\nu_W,\partial \nu_W;\phi)$-module (since the Hilbert module in the definition of $i!$ is constructed from a Hilbert $C_0(\nu_W,\partial \nu_W;\phi)$-module and the inclusion $\theta:C_0(\nu_W,\partial \nu_W;\phi) \rightarrow C_0(W^{\prime},\partial W^{\prime};\phi)$);
\item $T$ commutes with the action of $\sigma:C_b(\nu_W)\rightarrow \mathcal{L}(E)$ via multipliers of $C_0(\nu_W)$;
\item The map $C(W) \rightarrow \mathcal{L}(E)$ is induced from $\psi_0:C(W) \rightarrow C_b(\nu_W)$.
\end{enumerate} 
Let $h:\nu_W \times [0,1] \rightarrow \nu_W$ be the map defined by $(x,t) \rightarrow tx$.  Then 
$$\rho_t:C(W^{\prime}) \stackrel{{\rm restriction}}{\rightarrow} C_b(\nu_W) \stackrel{\circ h(\cdot,t)}{\rightarrow} C_b(\nu_W) \stackrel{\sigma}\rightarrow \mathcal{L}(E)$$ 
defines a homotopy from $\psi_0 \circ i\circ \sigma$ to the restriction map $C(N)\rightarrow C_b(\nu_W)$ composed with $\sigma$.  These three properties imply that $(E,\rho_t,T)$ is a $KK$-homotopy from $(E,\rho \circ i, T)$ and $(E,\rho^{\prime},T)$.
\end{proof}
\begin{lemma}Let $(W,\partial W)$, $(W^{\prime},\partial W^{\prime})$ and $(\tilde{W},\partial \tilde{W})$ be smooth $spin^c$-manifolds.  If $s:(W,\partial W) \rightarrow (W^{\prime},\partial W^{\prime})$ and $i:(W^{\prime},\partial W^{\prime}) \rightarrow (\tilde{W},\partial \tilde{W})$ are neat embeddings, then 
$$[s!]\otimes_{C_0(W^{\prime},\partial W^{\prime})} [i!] = [(i\circ s)!] \in KK(C_0(W,\partial W;\phi),C_0(\tilde{W},\partial \tilde{W};\phi))$$
\end{lemma}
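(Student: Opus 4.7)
My plan is to reduce the statement to the standard functoriality of the push-forward in $KK$-theory, adapted to the pullback algebras $C^*(W,\partial W;\phi)$. The key geometric input is the tubular neighbourhood decomposition: if $\nu_W$ denotes a normal bundle of $s(W)\subseteq W'$ and $\nu_{W'}$ a normal bundle of $i(W')\subseteq \tilde{W}$, then a normal bundle of $(i\circ s)(W)\subseteq \tilde{W}$ may be taken to be $\nu_W \oplus s^*(\nu_{W'})$, with the tubular neighbourhood in $\tilde{W}$ obtained by first using the tubular neighbourhood of $s(W)$ in $W'$ and then that of $i(W')$ in $\tilde{W}$. Because both embeddings are neat, the boundary fibres correctly with the boundary structures, so this decomposition holds as a decomposition of pairs $(\nu, \partial \nu)$.

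I would then plug this decomposition into the formula (\ref{pushForwardRel}) for each push-forward. Writing
$$[s!] = (\beta_{\nu_W}) \otimes [\tilde{\pi}_{\nu_W}] \otimes [\iota_{\nu_W}] \otimes [\theta_{\nu_W}], \qquad [i!] = (\beta_{\nu_{W'}}) \otimes [\tilde{\pi}_{\nu_{W'}}] \otimes [\iota_{\nu_{W'}}] \otimes [\theta_{\nu_{W'}}],$$
and using the multiplicativity of the Thom class $\beta_{\nu_W \oplus s^*(\nu_{W'})} = \beta_{\nu_W} \otimes \beta_{s^*(\nu_{W'})}$ (which is the standard fact in \cite[Appendix A]{BOOSW}, now interpreted inside $KK(\field{C}, C_0(\nu_W \oplus s^*(\nu_{W'})))$), together with the identifications of pullback projections $\tilde{\pi}_{\nu_W} \circ \tilde{\pi}_{\nu_{W'}|_{\nu_W}}$ with the projection from the total normal bundle, one assembles the product $[s!]\otimes[i!]$ into the formula defining $[(i\circ s)!]$.

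The main obstacle is bookkeeping: one must verify that the various $*$-homomorphisms $\tilde{\pi}, \iota, \theta$ compose correctly at the level of the pullback algebras $C^*(-, -; \phi)$, not merely for the ambient $C_0$ algebras. This requires checking that the boundary-component $*$-homomorphisms (the $B_1$-pieces) respect the decomposition of the normal bundle along $\partial W \hookrightarrow \partial W' \hookrightarrow \partial \tilde{W}$; the neatness hypothesis is precisely what makes this commute, since it guarantees the normal bundle to $\partial W$ in $\partial W'$ is $\nu_W|_{\partial W}$, and similarly for $\partial W'$ in $\partial \tilde{W}$. Once these compatibilities are verified, the associativity of the Kasparov product lets one collect the factors into the single expression defining $[(i\circ s)!]$.

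Finally, I would remark that the argument is the same in structure as \cite[Appendix B]{BOOSW}: the only new ingredient is to carry the boundary data (i.e.\ the $B_1$-factor) along in parallel with the interior data (the $B_2$-factor), which is possible because every $*$-homomorphism appearing in the definition of $i!$ was constructed via the universal property of pullbacks and hence splits into compatible interior and boundary pieces. This allows the proof to be written essentially componentwise on the two factors of the pullback, with the intertwining $\phi$-condition preserved automatically at each stage.
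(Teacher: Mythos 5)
Your outline is essentially correct, but it takes a genuinely different route from the paper. The paper does not prove the lemma as stated at all: it explicitly leaves the full $KK$-level identity to the reader and observes that only a weaker consequence is needed downstream, namely that for any $\xi\in K^0(W,\partial W;\phi)$ one has $(\xi\otimes\iota_W)\otimes(i\circ s)!=(\xi\otimes\iota_W)\otimes(s!\otimes i!)$; this weaker statement is then deduced from functoriality of the push-forward on ordinary $K$-theory together with the preceding lemma (which moves $\iota_{W'}$ past $i!$). Your approach instead attacks the full statement directly, via the decomposition $\nu_{i\circ s}\cong \nu_W\oplus s^*(\nu_{W'})$, multiplicativity of the Thom class, and the observation that neatness makes the boundary ($B_1$) components of the pullback algebras track the interior ($B_2$) components, so the argument of \cite[Appendix B]{BOOSW} runs componentwise. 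That is the standard and correct strategy, and it buys the stronger bivariant statement rather than only its image after pairing with a $K$-theory class; the paper's route buys brevity and avoids the bookkeeping entirely. Two points you should make explicit if you write this out in full: first, identifying the tubular neighbourhood of $(i\circ s)(W)$ in $\tilde W$ with the composite of the two tubular neighbourhoods requires the uniqueness-up-to-isotopy theorem for (neat) tubular neighbourhoods, and you must invoke the already-noted independence of $[i!]$ from these choices; second, the multiplicativity of the Thom class must be formulated as an exterior product over the total space of $\nu_W$ (the second factor is $\pi_{\nu_W}^*s^*(\nu_{W'})$, not $s^*(\nu_{W'})$ itself), which is exactly the form proved in \cite[Appendix B.2]{BOOSW}. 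With those caveats your sketch is sound and in fact proves more than the paper does.
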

\begin{proof}
We leave the proof to the reader.  In fact, we will only need a weaker result: If $\xi\in K^0(W,\partial W;\phi)$, then 
$$(\xi \otimes \iota_{W})\otimes (i\circ s)!= (\xi \otimes \iota_{W}) \otimes (s! \otimes i!) $$
This equality follows from a short $KK$-theory computation using the fact that the push-forward is functorial on $K$-theory and the previous lemma. 
\end{proof}
\begin{prop}
Let $(W,\xi,f)$ be a cycle in $K_*(X;\phi)$ and $V$ a $spin^c$-vector bundle over $W$ with even dimensional fibers.  Then $$\lambda((W,\xi,f)^V)=\lambda(W,\xi,f) \hbox{ in }KK^*(C(X),SC_{\phi})$$
\end{prop}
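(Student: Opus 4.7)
The plan is to reduce the identity, via the three lemmas just established, to a telescoping Kasparov product. By the first of these lemmas, the cycle $(W,\xi,f)^V$ equals $(W^V,s!(\xi),f\circ\pi)$, where $s:W\hookrightarrow W^V=S(V\oplus{\bf 1})$ is the north-pole section and $\pi:W^V\to W$ is the bundle projection. Since $V$ has even rank, both cycles lie in the same parity component of $K_*(X;\phi)$, and it suffices to prove
$$\mu(W^V,s!(\xi),f\circ\pi)=\mu(W,\xi,f) \text{ in } KK^*(C(X),SC_{\phi}).$$

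First I would fix compatible neat embeddings. Pick a $K$-oriented neat embedding $j:W^V\hookrightarrow H^{2k}$ for $k$ sufficiently large. Since $s$ is itself a $K$-oriented neat embedding, the composition $i:=j\circ s:W\hookrightarrow H^{2k}$ is again a $K$-oriented neat embedding of the required parity, and the third lemma supplies the functorial identity
$$[i!]=[s!]\otimes_{C^*(W^V,\partial W^V;\phi)}[j!].$$
I would compute $\mu$ of the modified cycle using $j$ and $\mu$ of the original cycle using $i$; both choices are admissible because $\mu$ is (known to be) independent of the choice of neat embedding.

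The second step is to unwind the definition of $\mu(W^V,s!(\xi),f\circ\pi)$ and rewrite $[[s!(\xi)]]$ by the second lemma applied with $s$ in place of $i$:
$$[[s!(\xi)]]=s!(\xi)\otimes[\iota_{W^V}]=[s]\otimes[[\xi]]\otimes[s!].$$
Inserting this, using $[f\circ\pi]=[f]\otimes_{C(W)}[\pi]$ together with $[\pi]\otimes_{C(W^V)}[s]=1_{C(W)}$ (which follows from $\pi\circ s=\mathrm{id}_W$), and then invoking $[s!]\otimes[j!]=[i!]$, the long product collapses:
$$\mu(W^V,s!(\xi),f\circ\pi)=[f]\otimes[[\xi]]\otimes[s!]\otimes[j!]\otimes\mathcal{B}=[f]\otimes[[\xi]]\otimes[i!]\otimes\mathcal{B}=\mu(W,\xi,f).$$

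The main obstacle is the invocation of the second lemma: moving the class $[\iota_{W^V}]$ through the push-forward $[s!]$ requires a $KK$-homotopy comparing the $C(W)$-representation induced by $s$ with the natural $C(W^V)$-representation on the Hilbert module realizing $\xi\otimes[\iota_W]\otimes[s!]$, and this is precisely the delicate content (modelled on Appendix B.2 of \cite{BOOSW}) that makes the second lemma nontrivial. Once this identification is in hand, everything else is a formal manipulation of Kasparov products guaranteed by functoriality of the push-forward and the retraction $\pi\circ s=\mathrm{id}_W$.
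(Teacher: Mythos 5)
Your argument is correct and follows essentially the same route as the paper's proof: identify $(W,\xi,f)^V$ with $(W^V,s!(\xi),f\circ\pi)$ via the north-pole section, use the second lemma to move $[\iota]$ past $[s!]$, invoke functoriality $[s!]\otimes[j!]=[(j\circ s)!]$, and collapse via $\pi\circ s=\mathrm{id}_W$ together with independence of $\mu$ on the choice of neat embedding. (You even correct a small slip in the paper, which writes $s\circ\pi_W=\mathrm{id}$ where $\pi\circ s=\mathrm{id}_W$ is meant.)
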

\begin{proof}
Let $Z=S(V\oplus {\bf 1})$, $i_Z:Z\rightarrow H^n$ be a neat embedding (we take $n$ even for even cycles and $n$ odd for odd cycles), $s:W \rightarrow Z$ be the neat embedding of $W$ into $Z$ via the north pole section of $Z$, and $\pi: V \rightarrow W$ denote the projection map.  The definition of $\lambda$, the fact that $ \pi \circ s =id_W$, and the previous three lemmas imply that 
\begin{eqnarray*}
\lambda((W,\xi,f)^V) & = & [f]\otimes [\pi] \otimes [[s!(\xi)]]\otimes [i_Z!] \\
& = & [f]\otimes [\pi] \otimes \iota_Z(s!(\xi))\otimes [i_Z!] \\
& = & [f] \otimes [\pi] \otimes [s] \otimes \iota_W(\xi) \otimes [s!] \otimes [i_Z!] \\
& = & [f] \otimes [\pi\circ s] \otimes \iota_W(\xi) \otimes [(i_Z \circ s)!] \\
& = & [f] \otimes [[\xi]] \otimes [(i_Z \circ s)!] \\
& = & \lambda(W,\xi,f) 
\end{eqnarray*}
The last equality follows since $i_Z \circ s$ is a neat embedding (of $W$ into $H^n$) and the independence of the definition of $\lambda$ on the choice of embedding.
\end{proof}
The bordism relation is considered next, but first some additional notation is introduced.  Recall that 
$$H^{2k}=\{ (x_1,\ldots,x_{2k}) \in \field{R}^{2k} \: |\: x_{2k}\ge 0\} $$
and let 
$$H^{2k}_-:=\{ (x_1,\ldots,x_{2k}) \in \field{R}^{2k}\: |\: x_{2k}\le 0\} $$
We will make use of the $C^*$-algebras $C_0(H^{2k},\field{R}^{2k-1};\phi)$ and $C_0(\field{R}^{2k},H^{2k}_-;\phi)$ along with the natural maps
\begin{enumerate}
\item $R:C_0(\field{R}^{2k},H^{2k}_-;\phi) \rightarrow C_0(H^{2k},\field{R}^{2k-1};\phi)$ defined by restriction;
\item $I:C_0(\field{R}^{2k};B_2) \rightarrow C_0(\field{R}^{2k},H^{2k}_-;\phi)$ defined via $f \mapsto (\tilde{f},0)$ where 
$$\tilde{f}= \left\{ \begin{array}{ccc} f(x) & : & x\in H^{2k} \\ 0  & : & x \in H^{2k}_- \end{array}\right.$$
(the well-definedness of $\tilde{f}$ follows from the fact that $f$ vanishes at $\infty$);
\item $\tilde{I}:C_0(\field{R}^{2k};B_2) \rightarrow C_0(H^{2k},\field{R}^{2k-1};\phi)$ defined via $(f,g) \mapsto (f,0)$.
\end{enumerate}
It follows from these definitions that $R\circ I =\tilde{I}$. 
\begin{prop} \label{borTopMap}
If $(W,\xi,f)$ is a boundary in the sense of Definition \ref{phiBor}, then $\lambda(W,\xi,f)$ is trivial in $KK^*(C(X),SC_{\phi})$.
\end{prop}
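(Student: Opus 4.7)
The strategy is to extend the embedding used to define $\mu$ along the filling bordism $Z$ and then to exploit the fact that the resulting ambient $KK$-group vanishes. Suppose $(W,\xi,f)$ is the boundary of a bordism $(Z,W,\eta,g)$ in the sense of Definition \ref{phiBor}, so that $\xi=\eta|_W$ and $f=g|_W$. I treat the even case ($W$ of dimension $2d$, with neat embedding $i_W:W\hookrightarrow H^{2k}$ for $k$ sufficiently large); the odd case is identical with all dimensions increased by one.

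For $k$ large, extend $i_W$ to a neat embedding $i_Z:Z\hookrightarrow H^{2k+1}$ such that $i_Z(\partial Z)\subseteq \partial H^{2k+1}=\field{R}^{2k}$, with $W$ sent to $i_W(W)\subseteq H^{2k}$ and $\partial Z-\mathrm{int}(W)$ sent into $H^{2k}_-$, so that $\partial W$ lies in $H^{2k}\cap H^{2k}_-=\field{R}^{2k-1}$. The normal bundle of $i_Z$ has even rank, so the construction of Proposition \ref{KKTrig} produces a push-forward class $[i_Z!]\in KK(C^*(Z,\partial Z-\mathrm{int}(W);\phi),C^*(H^{2k+1},H^{2k}_-;\phi))$. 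The restriction map $R':C^*(H^{2k+1},H^{2k}_-;\phi)\to C^*(H^{2k},\field{R}^{2k-1};\phi)$, defined by $(F,G)\mapsto (F|_{H^{2k}},G|_{\field{R}^{2k-1}})$, is a well-defined $*$-homomorphism. Naturality of push-forward under restriction (an analog of the lemmas preceding Proposition 3.5, established by parallel Kasparov-product manipulations) gives the identity $[r'_W]\otimes[(i_W)!]=[(i_Z)!]\otimes[R']$, where $r'_W:C^*(Z,\partial Z-\mathrm{int}(W);\phi)\to C^*(W,\partial W;\phi)$ is the restriction. Combined with $\xi=\eta|_W$ and $f=g|_W$, this yields the factorization $\mu(W,\xi,f)=([g]\otimes [[\eta]]\otimes [(i_Z)!])\otimes [R']$ inside $KK(C(X),C^*(H^{2k},\field{R}^{2k-1};\phi))$.

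It therefore suffices to show that the ambient group $KK^*(C(X),C^*(H^{2k+1},H^{2k}_-;\phi))$ vanishes. Both $H^{2k+1}$ and $H^{2k}_-$ are half-spaces of the form $\field{R}^m\times[0,\infty)$, so each of the algebras $C_0(H^{2k+1};B_2)$, $C_0(H^{2k}_-;B_1)$ and $C_0(H^{2k}_-;B_2)$ splits off a contractible $C_0([0,\infty))$ tensor factor and is therefore contractible as a $C^*$-algebra, giving trivial $KK^*$-theory in any first variable. The restriction $C_0(H^{2k+1};B_2)\to C_0(H^{2k}_-;B_2)$ is surjective, so the defining pullback fits into a short exact sequence whose kernel is $C_0(H^{2k+1}\setminus H^{2k}_-;B_2)$ (which is itself $KK^*$-trivial by another application of the six-term sequence, sitting between the two contractible algebras $C_0(H^{2k+1};B_2)$ and $C_0(H^{2k}_-;B_2)$); the associated six-term exact sequence in $KK$ then forces $KK^*(C(X),C^*(H^{2k+1},H^{2k}_-;\phi))=0$, whence $\mu(W,\xi,f)=0$.

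The main technical point is the functoriality identity $[r'_W]\otimes[(i_W)!]=[(i_Z)!]\otimes[R']$: it is the natural analog of the commutative diagram in Proposition \ref{KKTrig} with the ambient pair $(H^{2k+1},H^{2k}_-)$ in place of $(H^{2k},\field{R}^{2k-1})$, and is obtained by adapting the Kasparov-product computations already developed for the single-half-space case; the vanishing in the third paragraph is then the core conceptual content and is a direct consequence of the contractibility of the half-space algebras.
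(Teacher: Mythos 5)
Your argument is correct in outline, but it takes a genuinely different route from the paper. The paper never embeds the bordism $Z$ at all: it embeds only the closed manifold $\partial Z$ into $\field{R}^{2k}$ (so that $W$ lands neatly in $H^{2k}$), observes that $(\partial Z,(\eta_W)_{B_2}|_{\partial Z},g|_{\partial Z})$ is a boundary in $K_*(X;B_2)$ so that the standard $B_2$-coefficient map $\mu_{B_2}$ kills it, and then identifies $\mu(W,\xi,f)$ with $\tilde I_*(\mu_{B_2}(\partial Z,\cdot,\cdot))$ by a translation homotopy $j_t(m)=j(m)+(0,\dots,0,Nt)$ that slides the embedded $\partial Z$ off the hyperplane $x_{2k}=0$ into the interior of $H^{2k}$. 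Thus the paper outsources bordism invariance to the known closed-manifold case and handles the relative structure by a $KK$-homotopy. You instead embed the whole pair $(Z,\partial Z-\mathrm{int}(W))$ into $(H^{2k+1},H^{2k}_-)$ and factor $\mu(W,\xi,f)$ through $KK^*(C(X),C^*(H^{2k+1},H^{2k}_-;\phi))$, which you correctly show vanishes (the quotient $C_0(H^{2k}_-;B_1)$ and the kernel $C_0(H^{2k+1}\setminus H^{2k}_-;B_2)\cong C_0(\field{R}^{2k}\times[0,1))\otimes B_2$ are both cones, and the extension even splits via a proper retraction $H^{2k+1}\to H^{2k}_-$, so the six-term sequence applies). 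This is the direct analogue of the classical "embed the bordism in a half-space" proof and is arguably more conceptual; the price is that your push-forward $[(i_Z)!]$ and the identity $[r'_W]\otimes[(i_W)!]=[(i_Z)!]\otimes[R']$ are not instances of anything constructed in the paper, since the paper's $i!$ is defined only for neat embeddings in which the distinguished subspace is the entire boundary. To make your step rigorous you must choose $i_Z$ and its tubular neighbourhood in product form near the corner $\partial W$, so that $\nu_Z\cap H^{2k}$ is exactly the tubular neighbourhood of $W$ in $H^{2k}$ used to define $(i_W)!$ and $\nu_Z\cap H^{2k}_-$ is a tubular neighbourhood of $\partial Z-\mathrm{int}(W)$; with that arrangement the identity can be checked on cocycle representatives exactly as in Proposition \ref{KKTrig}, so this is real but routine work of the same kind the paper already carries out elsewhere.
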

\begin{proof}
We prove the result for even cycles; the odd case is similar.  The reader should recall the notation introduced immediately before the proposition. Let $(W,\xi,f)$ be a cycle in $K_0(X;\phi)$ which is the boundary of $((Z,W),\eta_Z,g)$.  Fix an embedding $j:\partial Z \hookrightarrow \field{R}^{2k}$ such that the restriction of $j$ to $W\subseteq \partial Z$ is a neat embedding of $W\rightarrow H^{2k}$.  Denote $j|_W$ by $i$.  Let $\nu_j$ be a normal bundle for $j(\partial Z) \subseteq \field{R}^{2k}$.  Then $\nu_i:=\nu_j |_{H^{2k}}$ is a normal bundle for $i(W) \subseteq H^{2k}$. 
\par 
By definition, $\lambda(W,\xi,f)=[f]\otimes [[\xi]]\otimes [i!] \in KK^0(C(X),C_0(H^{2k},\field{R}^{2k-1};\phi))$.  Let $(M,\eta,h)$ denote $(\partial Z,(\eta_Z)_{B_2}|_{\partial  Z},g|_{\partial Z})$ and $$\lambda_{B_2}(M,\eta,h):=[h]\otimes_{C(M)} [[\eta]]\otimes_{C(M)} [j!]\in KK^0(C(X),C_0(\field{R}^{2k})\otimes B_2)$$  
Standard results (see for example, \cite{Wal}) imply that $\lambda_{B_2}$ is a well-defined map from $K_0(X;B)$ to $KK^0(C(X),B_2)$.  In particular, $\lambda_{B_2}$ vanishes on boundaries.  Hence $\lambda_{B_2}(M,\eta,h)=0$ (since $(M,\eta,h)$ is a boundary in $K_*(X;B_2)$).  This observation reduces the proof to showing that 
\begin{equation} 
\lambda(W,\xi,f)=\tilde{I}_*(\lambda_{B_2}(M,\eta,h)) \label{isBoundaryBordismProof}
\end{equation}
where $\tilde{I}_*:KK^0(C(X),C_0(\field{R}^{2k})\otimes B_2) \rightarrow KK^0(C(X),C_0(H^{2k},\field{R}^{2k-1};\phi))$ is the map on $KK$-theory induced from the $*$-homomorphism, $\tilde{I}$. 
\par
Let $N\in \field{N}$ be sufficiently large so that the normal bundle $\nu_j$ translated by $(0,\ldots,0,N)$ is contained in ${\rm int}(H^{2k})$. 
For $t\in [0,1]$, let $j_t$ denote the embedding of $M$ into $\field{R}^{2k}$ defined via $j_t(m):=j(m)+(0,\ldots,0,Nt)$   For each $t$, let 
$$\lambda_{j_t}(M,\eta,h) = [h]\otimes_{C(M)} [[\tilde{\eta}_t]]\otimes_{C_0(M,W_t;\phi)} [j_t!]\in KK^0(C(X),C_0(\field{R}^{2k},H^{2k-1};\phi))$$   
where $W_t:=j_t(M)\cap H^{2k}_-$ and $[[\tilde{\eta}_t]] \in KK(C(M),C_0(M,W_t;\phi))$ the image of $\eta$ under the map induced from the $*$-homomorphism, $C_0(M,W;\phi) \rightarrow C_0(M,W_t;\phi)$ defined via $(f,g) \mapsto (f,g|_{W_t})$.
It follows from the definitions of $I$, $R$, $j_t$, etc that 
$$R_*(\lambda_{j_0}(M,\eta,h))=\lambda(W,\xi,f) \hbox{ and } \lambda_{j_1}(M,\eta,h)=I_* (\lambda_{B_2}(M,\eta,h))$$   
Moreover, $\lambda_{j_t}(M,\eta,h)$ defines a homotopy between the $KK$-cycles $\lambda_{j_0}(M,\eta,h)$ and $\lambda_{j_1}(M,\eta,h)$.  Hence
\begin{eqnarray*}
\lambda(W,\xi,f) & = &  R_*(\lambda_{j_0}(M,\eta,h)) \\
&  \sim &  R_*(\lambda_{j_1}(M,\eta,h)) \\
& = & (R \circ I)_*(\lambda_{B_2}(M,\eta,h)) \\
& = & \tilde{I}_*(\lambda_{B_2}(M,\eta,h))
\end{eqnarray*}
As noted in Equation \ref{isBoundaryBordismProof}, this implies the result.
\end{proof}
\begin{theorem} \label{IsoToKKFinCW}
If $X$ is a finite CW-complex, then the map $\lambda:K_*(X;\phi) \rightarrow KK^*(C(X),SC_{\phi})$ is an isomorphism.
\end{theorem}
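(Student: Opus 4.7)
The plan is to prove the theorem by a five-lemma argument, embedding the map $\mu$ into a morphism between two long exact sequences where the other vertical maps are already known to be isomorphisms.

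First, by the previous two propositions $\mu$ is a well-defined homomorphism: it respects vector bundle modification and vanishes on boundaries, and it is easy to check that it is additive under disjoint union and changes sign under reversal of the $spin^c$-structure. Combined with the observation (whose proof is essentially contained in the cocycle formula \eqref{pushForActCocyc}) that $\mu$ is independent of the embedding and of the choice of normal bundle, this shows that $\mu$ descends to a group homomorphism $K_*(X;\phi)\to KK^*(C(X),SC_{\phi})$.

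Next, I would set up the following ladder of six-term exact sequences:
\begin{center}
$\minCDarrowwidth10pt\begin{CD}
K_*(X;B_1) @>\phi_*>> K_*(X;B_2) @>r>> K_*(X;\phi) @>\delta>> K_{*+1}(X;B_1) @>\phi_*>> K_{*+1}(X;B_2) \\
@V\mu_{B_1}VV @V\mu_{B_2}VV @V\mu VV @V\mu_{B_1}VV @V\mu_{B_2}VV \\
KK^*(C(X),B_1) @>\phi_*>> KK^*(C(X),B_2) @>>> KK^*(C(X),SC_{\phi}) @>>> KK^{*+1}(C(X),B_1) @>\phi_*>> KK^{*+1}(C(X),B_2)
\end{CD}$
\end{center}
The top row is the Bockstein-type exact sequence of Theorem \ref{bockTypeSeq}. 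The bottom row is the Puppe/mapping-cone exact sequence in $KK$-theory associated to $\phi:B_1\to B_2$, coming from the extension $0\to SB_2\to C_{\phi}\to B_1\to 0$ and the identification $KK^*(C(X),SC_{\phi})\cong KK^{*-1}(C(X),C_{\phi})$. The vertical arrows $\mu_{B_1}$ and $\mu_{B_2}$ are the Baum-Douglas isomorphisms for $K$-homology with coefficients in a $C^*$-algebra (see \cite{BHS, Wal}), and are known to be isomorphisms.

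The heart of the proof is the verification that all squares commute. The squares with vertical arrows $\mu_{B_2},\mu_{B_2}$ and vertical arrows $\mu_{B_1},\mu_{B_2}$ commute by the functoriality of the Baum-Douglas construction and the explicit description of $\phi_*$ and $r$ on cycles given in Theorem \ref{bockTypeSeq}: pushing $(M,F_{B_1},f)$ to $(M,\phi_*(F_{B_1}),f)$ corresponds exactly, via $\mu_{B_i}$, to the map induced by $\phi$ at the level of $KK$-theory. The square involving $r$ and the inclusion $SB_2\hookrightarrow C_{\phi}$ commutes because the cocycle $(E_{B_2},\emptyset,\emptyset)\in K^0(M,\emptyset;\phi)$ is precisely the image of $[E_{B_2}]\in K^0(M;B_2)$ under the natural inclusion, and the push-forward diagram of Proposition \ref{KKTrig} identifies this inclusion with the one coming from the extension.

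The main obstacle, as expected, is the square involving the boundary maps:
\begin{center}
$\begin{CD}
K_*(X;\phi) @>\delta>> K_{*+1}(X;B_1) \\
@V\mu VV @V\mu_{B_1}VV \\
KK^*(C(X),SC_{\phi}) @>\partial>> KK^{*+1}(C(X),B_1)
\end{CD}$
\end{center}
Here $\delta$ sends $(W,\xi,f)$ to $(\partial W,\xi_{B_1},f|_{\partial W})$, so one must show that the analytic boundary $\partial$ of the mapping cone sequence, applied to the class $[f]\otimes[[\xi]]\otimes[i!]$ constructed from a neat embedding $i:W\hookrightarrow H^{2k}$, produces the class obtained from a neat embedding of $\partial W$ into $\field{R}^{2k-1}$ twisted by $\xi_{B_1}$. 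I would prove this by using the commutative diagram of Proposition \ref{KKTrig} together with naturality of the boundary map in $KK$-theory: the vertical component of the normal bundle to $W\subseteq H^{2k}$ restricts, on $\partial W$, to a normal bundle of $\partial W$ in $\field{R}^{2k-1}$, so the push-forward $i!$ restricts compatibly to $(i|_{\partial W})!$, and the $B_1$-component of the pullback description of $C^*(W,\partial W;\phi)$ yields precisely the boundary class after passing through the Puppe sequence. With this final square in hand, the five-lemma applied to the above ladder yields that $\mu$ is an isomorphism.
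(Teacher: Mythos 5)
Your proposal is correct and follows essentially the same route as the paper: a ladder with the Bockstein-type sequence of Theorem \ref{bockTypeSeq} over the mapping-cone long exact sequence in $KK$-theory, commutativity of the squares (with the boundary square handled via Proposition \ref{KKTrig}), and the Five Lemma combined with the known isomorphisms $\mu_{B_1}$, $\mu_{B_2}$. The only difference is one of detail: the paper spells out the boundary square via explicit $*$-homomorphisms $\gamma$, $\iota_W$, $r_W$ and the three resulting $KK$-identities, which you would still need to write out to complete that step.
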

\begin{proof}
The main step is to show that the following diagram commutes:
{\footnotesize \begin{center}
$\minCDarrowwidth10pt\begin{CD}
@>>> K_0(X;B_1) @>\phi_*>> K_0(X;B_2) @>r>> K_0(X;\phi) @>\delta>> K_1(X;B_1) @>>> \\
@. @V\lambda_{B_1} VV @V\lambda_{B_2} VV @V\lambda VV @V\lambda_{B_1} VV @. \\
@>>>  KK^0(C(X),B_1) @>\phi_* >> KK^0(C(X),B_2) @> r^{ana}_*>> KK^0(C(X),SC_{\phi}) @>\delta^{ana}_*>> KK^1(C(X),B_1) @>>> 
\end{CD}$
\end{center}}
where 
\begin{enumerate}
\item The first exact sequence is from Theorem \ref{bockTypeSeq};
\item The vertical maps, $\lambda_{B_i}$ ($i=1,2$), are defined at the level of cycles via $\lambda_{B_i}(M,E_{B_i},f)=[f]\otimes_{C(M)} [[E_{B_i}]]\otimes_{C(M)} [D_M]$ (see \cite{Wal} for details);
\item The second exact sequence is the long exact sequence in $KK$-theory obtained from the short exact sequence of $C^*$-algebras 
$$0 \rightarrow SB_2 \rightarrow C_{\phi} \rightarrow B_1 \rightarrow 0$$
\end{enumerate}
\par
Again, the details of commutativity are given for even cycles, but the odd case is similar.  That $\lambda_{B_2} \circ \phi_*=\phi_* \circ \lambda_{B_1}$ is standard.  With the goal of showing that $r^{ana} \circ \lambda_{B_2} = \lambda \circ r$ in mind, let $(M,E_{B_2},f)$ be a geometric cycle in $K_0(X;B_2)$.  Then 
$$(r^{ana} \circ \lambda_{B_2}) (M,E_{B_2},f)= r^{ana} ([f]\otimes [[E_{B_2}]]\otimes [i!])$$  
where $i:M \rightarrow \field{R}^{2k}$ is an embedding.  But $r^{ana}$ is given by the inclusion of $C_0(\field{R}^{2k})\otimes B_2 \rightarrow C_0(H^{2k},\field{R}^{2k-1};\phi)$.  It is induced from the natural inclusion, $\hat{r}: \field{R}^{2k} \hookrightarrow H^{2k}$. However, the map $i\circ \hat{r}$ is a (neat) embedding of $M \rightarrow H^{2k}$. Using this embedding in the definition of $\lambda$, leads to the result. 
\par
Next, the proof that $\lambda_{B_1} \circ \delta = \delta^{ana} \circ \lambda$ is considered.  Let $(W,\xi,f)$ be a cycle in $K_0(X;\phi)$ and $i:W \hookrightarrow H^{2k}$ a neat embedding.  Then 
$$\lambda_{B_1}(\delta(W,\xi,f))=\lambda_{B_1}(\partial W,\xi_{B_1},f|_{\partial W})=[f|_{\partial W}]\otimes [[\xi_{B_1}]]\otimes [i|_{\partial W}!]$$  
Whereas
$$(\delta^{ana} \circ \lambda) (W,\xi,f)=\delta^{ana} ([f] \otimes [[\xi]]\otimes [i!]) = [f] \otimes [[\xi]]\otimes [i!] \otimes [ev_{\field{R}^{2k}}]$$
where $ev_{\field{R}^{2k}}: C_0(H^{2k},\field{R}^{2k-1};\phi) \rightarrow C_0(\field{R}^{2k-1} )\otimes B_1$ is given by $(f,g) \rightarrow g$.  To compare these $KK$-classes, three $*$-homomorphisms are required; they are 
\begin{enumerate}
\item $\gamma: C_0(W,\partial W;\phi) \rightarrow C(\partial W) \otimes B_1$ is defined via $(f,g) \mapsto g$; 
\item $\iota_W : C(W) \otimes C_0(W,\partial W;\phi) \rightarrow  C_0(W,\partial W;\phi)$ is defined above in the discussion following Equation \ref{pushForwardRel};
\item $r_W: C(W) \rightarrow C(\partial W)$ is the restriction to the boundary (i.e., $r_W(f)=f|_{\partial W}$);
\end{enumerate}
The $KK$-classes associated to these $*$-homomorphisms satisfy the following 
\begin{enumerate}
\item  $[f|_{\partial W}]=[f] \otimes_{C(W)} [r_W]$; 
\item $[r_W]\otimes [[\xi_{B_1}]] = [[\xi]] \otimes \gamma$;
\item $[i!] \otimes [ev_{\field{R}^{2k}}] =[\gamma]\otimes [i|_{\partial W}!]$.  
\end{enumerate}
The proofs of these equalities follows from standard properties of $KK$-theory.  The first equality is standard.  In regards to the second (i.e., showing that $[r_W]\otimes [[\xi_{B_1}]] = [[\xi]] \otimes \gamma$), we consider the case when $\xi$ is given by a triple $(E_{B_2},F_{B_1},\alpha)$ (rather than a formal difference of such triples); the general case easily follows.  If $E_A$ is a $A$-bundle over $M$, then let $\Gamma(M;E_A)$ denote the continuous sections of $E_A$. 
\par  
Using this notation, the Kasparov cycle $[[(E_{B_2},F_{B_1},\alpha)]]$ is given by $(\mathcal{E},\rho,0)$ where
$$ \mathcal{E}= \{ (s_W,s_{\partial W})\in \Gamma(W;E_{B_2})\oplus \Gamma(\partial W;F_{B_1}) \: | \: (s_W)|_{\partial W} = \gamma \circ (s_{\partial W} \otimes Id_{B_2})\}$$
and, for $g\in C(W)$,
$$ \rho(g)\cdot (s_W,s_{\partial W}):= (g\cdot s_W, g|_{\partial W}\cdot s_{\partial W}) $$
On the other hand, the Kasparov cycle $[[F_{B_1}]]$ is given by 
$$ (\Gamma(\partial W;F_{B_1}),\varphi,0)$$
where $\varphi$ is the representation of $C(\partial W)$ via pointwise multiplication.  The Kasparov products $[r_W]\otimes [[F_{B_1}]]$ and $[[(E_{B_2},F_{B_1},\alpha)]]\otimes [\gamma]$ can be explicitly computed.  The methods used in the proof of Lemma 3.4.4 in \cite{Rav} can be used to prove the equality of these $KK$-elements; the details are left to the reader.
\par
Finally, that $[i!] \otimes [ev_{\field{R}^{2k}}] =[\gamma]\otimes [i|_{\partial W}!]$ follows from the commutative diagram considered in Remark \ref{KKTrig}.
\par
These computations imply that
\begin{eqnarray*}
[f|_{\partial W}]\otimes [[\xi_{B_1}]]\otimes [i|_{\partial W}!] & = & [f] \otimes [r_W] \otimes [[\xi_{B_1}]]\otimes [i|_{\partial W}!] \\ 
& = &  [f] \otimes [[\xi]] \otimes [\gamma] \otimes [i|_{\partial W}!] \\
& = & [f] \otimes [[\xi]]\otimes [i!] \otimes [ev_{\field{R}^{2k}}]
\end{eqnarray*}
This completes the proof that the diagram given at the beginning of the proof commutes.  The Five Lemma and the fact that $\lambda_{B_1}$ and $\lambda_{B_2}$ are isomorphisms for $X$ a finite CW-complex (see for example \cite{Wal}) then imply that $\lambda$ is also an isomorphism.  
\end{proof}
\begin{define}
Let $(W,\xi,f)$ be a cycle in $K_p(X;\phi)$ ($p=0$ or $1$).  Then, for $k$ sufficiently large, there exists, $i:W \rightarrow H^{2k+p}$, a $K$-oriented neat embedding of $W$ into the halfspace $H^{2k+p}$.  The topological index of $(W,\xi,f)$ is defined to be
$${\rm ind}_{top}(W,\xi,f):=i!(\xi)$$
Using Bott periodicity, we can (and will) consider this as an element in $K_p(SC_{\phi})$.  
\end{define}
\begin{cor} \label{topIndCorPt}
The topological index map is well-defined (as a map from $K_p(X;\phi)$ to $K_p(H^{2},\field{R};\phi)$).  Moreover, in the case when $X$ is a point, the topological index map is an isomorphism.
\end{cor}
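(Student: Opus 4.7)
The plan is to identify the topological index map with the composition $c^*\circ \mu$, where $\mu: K_*(X;\phi) \to KK^*(C(X), SC_\phi)$ is the isomorphism from Theorem \ref{IsoToKKFinCW} and $c^*: KK^*(C(X), SC_\phi) \to KK^*(\field{C}, SC_\phi) \cong K_*(SC_\phi)$ is the map on $KK$-theory induced by the unital inclusion $c: \field{C} \hookrightarrow C(X)$. Because $\mu$ is already known to be well defined and $c^*$ is functorial, this factorization will deliver well-definedness of ${\rm ind}_{top}$ at no additional cost.

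To verify the factorization, I would unwind the definition of $\mu$: given a cycle $(W, \xi, f) \in K_p(X; \phi)$ and a $K$-oriented neat embedding $i: W \hookrightarrow H^{2k+p}$,
\[
c^*(\mu(W, \xi, f)) = [c] \otimes_{C(X)} [f] \otimes_{C(W)} [[\xi]] \otimes_{C^*(W, \partial W; \phi)} [i!],
\]
with the Bott isomorphism suppressed. The composite $*$-homomorphism $f \circ c: \field{C} \to C(W)$ is precisely the unital inclusion $p: \field{C} \to C(W)$ considered in the proof of the second lemma of this section, which established the identity $[p] \otimes [[\xi]] = \xi$ in $KK(\field{C}, C^*(W, \partial W; \phi)) = K^0(W, \partial W; \phi)$. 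Substituting then yields
\[
c^*(\mu(W, \xi, f)) = \xi \otimes_{C^*(W, \partial W; \phi)} [i!] = i!(\xi) = {\rm ind}_{top}(W, \xi, f),
\]
as required.

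For the isomorphism statement, when $X$ is a point the inclusion $c: \field{C} \to C(X) = \field{C}$ is the identity, so $c^* = {\rm id}$ and ${\rm ind}_{top}$ coincides with $\mu$; Theorem \ref{IsoToKKFinCW} applied to the point then gives that ${\rm ind}_{top}: K_p({\rm pt}; \phi) \to K_p(SC_\phi) \cong K_p(H^2, \field{R}; \phi)$ is an isomorphism, the last identification coming from the first lemma of this section. Since essentially all of the substantive work (compatibility with bordism and vector bundle modification, and the isomorphism itself) has been done in Theorem \ref{IsoToKKFinCW}, the only remaining step is the short $KK$-theoretic identity ${\rm ind}_{top} = c^*\circ \mu$, which is not expected to pose any serious obstacle.
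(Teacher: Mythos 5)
Your proposal is correct and matches the paper's own argument: the paper likewise deduces well-definedness from the factorization ${\rm ind}_{top}=c^*\circ\mu$ and obtains the isomorphism statement as the special case $X=\mathrm{pt}$ of Theorem \ref{IsoToKKFinCW}. Your additional verification of the factorization via the identity $\xi=[p]\otimes\iota_W(\xi)$ is a reasonable fleshing-out of what the paper leaves implicit.
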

\begin{proof}
The first statement follows from the fact that the topological index map is given by the composition, $c^* \circ \lambda$, where $c:\field{C} \rightarrow C(X)$ is the natural inclusion and $\lambda$ is the isomorphism in Theorem \ref{IsoToKKFinCW}.  The second statement follows as a special case of Theorem \ref{IsoToKKFinCW}. 
\end{proof}

\section{An index map via boundary conditions} \label{higIndSec}
Our goal is the construction of an analytic index map from $K_*(X;\phi)$ to $K_{*+1}(C_{\phi})$. This index map will be defined under the assumption that $\phi_*: K_*(B_1) \rightarrow K_*(B_2)$ is injective; an important example is the case when $\phi$ is the unital inclusion of the complex numbers into a ${\rm II}_1$-factor. We make use of higher Atiyah-Patodi-Singer index theory.  In the next subsection, we discuss the relationship between the higher Atiyah-Patodi-Singer index and vector bundle modification.  This discussion is written in a self-contained manner as its main result is of some independent interest; it also serves as an introduction to higher Atiyah-Patodi-Singer index theory and the notation required for the second subsection. The reader is directed to \cite{MP} and \cite{Wahl2} and references therein for further details on this theory.
\subsection{Higher Atiyah-Patodi-Singer index theory and vector bundle modification}    
Following \cite{Wahl2}, we introduce some notation.  Let $W$ be a connected, compact, Riemannian ${\rm spin^c}$-manifold with boundary with a product structure in a neighborhood of the boundary.  Also let $W_{cyl}$ denote the manifold obtained from $W$ by attaching a cylindrical end to the boundary of $W$. In other words, there exists $\epsilon>0$, submanifold $Z_r \subseteq W_{cyl}$, and ${\rm spin^c}$-preserving isometry $e: Z_r \rightarrow (-\epsilon, \infty) \times \partial W$ such that 
$$W=W_{cyl} -  e^{-1}((0,\infty)\times \partial W)$$
We also let $Z:=\field{R}\times \partial M$, $U_{\epsilon}:= e^{-1}((-\epsilon,0])\subseteq W$, and $p$ denote the projection $U_{\epsilon} \rightarrow \partial W$.  In an abuse of notation, we refer to $\partial W \times (0,\infty)$ when working with $e^{-1}((0,\infty)\times \partial W)$.
\par
Let $B$ be a unital $C^*$-algebra, $E_B$ be a $B$-bundle over $W$ and $S_W$ be the spinor bundle associated with the ${\rm spin^c}$-structure on $W$.  Then, $\mathcal{E}:=S_{W}\otimes_{\field{C}}E_B$ has a natural Dirac $B$-bundle structure in the sense of \cite[Section 2]{Wahl2}.  We denote the Clifford connection on this bundle by $\nabla$ and assume that this construction respects the product structure of $\partial W \subseteq W$.  In particular, $\mathcal{E}|_{U_{\epsilon}} = p^*(\mathcal{E}|_{\partial W})$.
\par
Let $\slashed{\partial}_{\partial W}$ denote the Dirac operator associated to the bundle $S_{\partial W}\otimes E_B|_{\partial W}$.  In \cite{Wahl2} (also see \cite{MP}), a number of operators are associated to the data introduced in the previous two paragraphs.  First, however, we must perturb the operator on the boundary.  Let $A$ be a selfadjoint operator in $\mathcal{B}(L^2(\partial W; S_{\partial W} \otimes (E_B|_{\partial W})))$ such that $\slashed{\partial}_{\partial W} + A$ is invertible. The existence of $A$ follows from the vanishing of the index of $\slashed{\partial}_{\partial W}$ (see \cite{MP} for further details).  In fact, we could assume that $A$ is a smoothing operator.  Following the notation of \cite{Wahl2}, let $D_W(A)$ be the operator on $W$ associated to higher Atiyah-Patodi-Singer boundary conditions and $D_{W_{cyl}}(A)$ be the Dirac operator on $W_{cyl}$ perturbed on the cylinder by $A$.  A detailed discussion of these operators (in particular, their construction) can be found in \cite[Section 2]{Wahl2}.  
\par
Since the latter operator is of more importance in this work, we only give the details of its construction. Let $\tilde{\mathcal{E}}$ denote the extension of $\mathcal{E}$ from $W$ to $W_{cyl}$, $\slashed{\partial}$ denote the Dirac operator associated to it, and $\chi:W \rightarrow [0,1]$ be a function which satisfies
\begin{enumerate}
\item ${\rm supp}(\chi)\subseteq \partial W \times (-\frac{3\epsilon}{4},\infty)$;
\item For each $w\in \partial W \times (0,\infty)$, $f(\chi)=1$;
\end{enumerate}   
Denote the Clifford action by $c$ and the coordinate in the normal direction by $x_1$.  Then $D_{W_{cyl}}(A)$ is defined to be the closure (on $L^2(W_{cyl};\tilde{\mathcal{E}})$) of the operator 
$$\slashed{\partial} - c(dx_1)\chi A$$
It has an associated index in the $K$-theory of $B$.  The reader can find further details on this construction in \cite{Wahl2}.  
\par
Our goal is to consider vector bundle modification as it relates to higher index theory for manifolds with boundary.  As such, let $V$ be a ${\rm spin^c}$-vector bundle over $W$ with even-dimensional fibers.  Further, assume that $V$ respects the product structure of $\partial W \subseteq W$.  Using the vector bundle modification operation, we obtain from $W$ and $V$ a ${\rm spin^c}$-manifold $\hat{W}:=S(V\oplus {\bf 1})$ where ${\bf 1}$ denotes the trivial real line bundle over $W$; note that $\hat{W}$ is a fiber bundle over $W$.  Moreover, since $W$ is connected, the fiber is $S^{2k}$ for some $k\in \field{N}$.  By extending the vector bundle $V$ to $W_{cyl}$, we can also consider the vector bundle modification of $W_{cyl}$.  We denote the resulting manifold by $\hat{W}_{cyl}$.
\par
The vector bundle modification operation affects the bundle data on $W$ as follows.  Let $\beta$ denote the Bott bundle over $\hat{W}$; it is a vector bundle and its construction can be found in \cite{BD}.  Then the Hilbert $B$-bundle on $\hat{W}$ is given by $\pi^*(E_B)\otimes_{\field{C}} \beta$ where $\pi:\hat{W} \rightarrow W$ is the projection map.  By the two out of three property of ${\rm spin^c}$-vector bundles (see for example \cite{BHS}), there is a ${\rm spin^c}$-structure on $\hat{W}$.  We let $S_{\hat{W}}$ denote the spinor bundle associated with the ${\rm spin^c}$-structure and $\hat{\mathcal{E}}$ denote the $B$-Dirac bundle $S_{\hat{W}}\otimes \pi^*(E_B)\otimes_{\field{C}} \beta$.  These constructions can also be applied to $\hat{W}_{cyl}$.  In an abuse of notation, we denote the Bott bundle over $\hat{W}_{cyl}$ also by $\beta$ and the $B$-Dirac bundle over $\hat{W}_{cyl}$ also by $\hat{\mathcal{E}}$.  Based on this discussion, we can construct the associated operators discussed in the preceeding paragraphs (this time on the manifolds $\hat{W}$ and $\hat{W}_{cyl}$).  However, the construction of these operators involved the choice of operator $A$.  We would like to construct from a choice of $A$ on the base $W$ a natural choice of such an operator for $\hat{W}$.  
\par
The desired construction and the main result of this subsection are the content of the next proposition.  The proof requires the following lemma which is a well-known result in $KK$-theory (cf. \cite[Lemma 2.7]{BHS} in the case of analytic $K$-homology).
\begin{lemma} \label{KKLemma}
Let $(\mathcal{E},\rho,F)$ be a Kasparov cycle representing a class in $KK^0(A,B)$ and suppose that $T\in \mathcal{L}(\mathcal{E})$ is a self-adjoint, odd-graded involution which commutes with action of $A$ and anticommutes with $F$.  Then the class (in $KK^0(A,B)$) of $(\mathcal{E},\rho,F)$ is zero.
\end{lemma}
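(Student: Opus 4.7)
The plan is to exhibit a norm-continuous operator homotopy from $(\mathcal{E},\rho,F)$ to a degenerate cycle, which will show the class is zero in $KK^0(A,B)$. Concretely, I would set
\[ F_t := \cos(\pi t/2)\,F + \sin(\pi t/2)\,T, \qquad t\in[0,1], \]
so that $F_0 = F$ and $F_1 = T$. Each $F_t$ is self-adjoint and odd-graded because $F$ and $T$ are.

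The key cancellation is that $\{F,T\} = 0$ together with $T^2 = 1$ yield
\[ F_t^2 = \cos^2(\pi t/2)\,F^2 + \sin^2(\pi t/2), \]
whence $\rho(a)(F_t^2 - 1) = \cos^2(\pi t/2)\,\rho(a)(F^2 - 1)$ is compact because $(\mathcal{E},\rho,F)$ already satisfies the Kasparov conditions. Since $T$ commutes with $\rho(a)$ we also obtain $[F_t,\rho(a)] = \cos(\pi t/2)\,[F,\rho(a)]$, which is compact. Hence $(\mathcal{E},\rho,F_t)$ is a Kasparov cycle for every $t\in[0,1]$, and the path is norm-continuous in $F_t$, so it is an operator homotopy.

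At $t=1$ the cycle is $(\mathcal{E},\rho,T)$, for which $T^2 = 1$ and $[T,\rho(a)] = 0$ hold \emph{exactly} (not merely modulo compacts); this is therefore a degenerate Kasparov cycle and represents the zero class in $KK^0(A,B)$. Combined with the homotopy above this gives the lemma. The only real point requiring care is verifying the Kasparov conditions along the path, but this is essentially immediate once one notices that the three hypotheses on $T$ (self-adjoint involution, commuting with $\rho$, anticommuting with $F$) each get used exactly once in the three computations above, so there is no substantial obstacle.
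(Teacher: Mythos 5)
Your proof is correct, and it is the standard argument (the rotation homotopy $F_t=\cos(\pi t/2)F+\sin(\pi t/2)T$ ending at the degenerate cycle $(\mathcal{E},\rho,T)$); the paper itself does not prove the lemma but defers to \cite[Lemma 2.7]{BHS}, which proceeds in exactly this way. The only cosmetic point is that $F$ need only be self-adjoint modulo compacts, but since $\rho(a)(F_t-F_t^*)=\cos(\pi t/2)\,\rho(a)(F-F^*)$ this causes no difficulty.
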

\begin{prop} \label{APSandVBM}
We use the notation introduced in the previous few paragraphs. For example, $W$ denotes a compact $spin^c$-manifold with boundary, $E_B$ a $B$-bundle, and $V$ a $spin^c$-vector bundle over $W$ with even dimensional fibers.  Then, given a choice of Dirac operator on $W$ and selfadjoint operator $A$ (see above), there exists a Dirac operator on $\hat{W}$ and selfadjoint operator $\hat{A}$ such that  
$${\rm ind}(D_W(A))={\rm ind}(D_{\hat{W}}(\hat{A})) \in K_*(B)$$
The reader should note that the Dirac operator on $\hat{W}$ and $\hat{A}$ are defined in the proof.  
\end{prop}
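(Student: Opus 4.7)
The strategy is to reduce the APS index on $\hat{W}$ to the APS index on $W$ by exploiting the fiber bundle structure $\pi:\hat{W}\to W$ with fiber $S^{2k}$, together with the fact that the Dirac operator on $S^{2k}$ twisted by the Bott bundle $\beta$ has index one with a one-dimensional kernel. The operator $\hat{A}$ is built by lifting $A$ along the boundary fiber bundle, and the two indices are compared via a fiberwise orthogonal splitting in $KK$-theory combined with Lemma \ref{KKLemma}.

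First, I would construct the data on $\hat{W}$. The $spin^c$-structure on $\hat{W}$ and $\beta$ give $\hat{\mathcal{E}}=S_{\hat{W}}\otimes\pi^*(E_B)\otimes\beta$; equip it with a compatible Clifford connection. Since $V$ respects the product structure near $\partial W$, so does $\hat{\mathcal{E}}$, and the framework of \cite{Wahl2} applies to $\hat{W}$. The boundary $\partial\hat{W}$ is a sphere bundle $\pi_\partial:\partial\hat{W}\to\partial W$, and pushforward along $\pi_\partial$ identifies $L^2(\partial\hat{W},\hat{\mathcal{E}}|_{\partial\hat{W}})$ with the $L^2$-sections of an infinite-rank Hilbert $B$-module bundle over $\partial W$. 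This bundle splits orthogonally into the fiberwise kernel of the Bott-twisted sphere Dirac operator, which is a rank-one subbundle canonically isomorphic to $S_{\partial W}\otimes(E_B|_{\partial W})$, and its complement. I define $\hat{A}$ to act as $A$ on the kernel summand and as $0$ on the complement. Then $\slashed{\partial}_{\partial\hat{W}}+\hat{A}$ is invertible: on the kernel summand it restricts to $\slashed{\partial}_{\partial W}+A$, while on the complement the fiberwise sphere operator is already invertible, so the perturbation is unnecessary.

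For the comparison, I would pass to the cylindrical-end manifolds $W_{cyl}$ and $\hat{W}_{cyl}$, with $\hat{W}_{cyl}\to W_{cyl}$ still a sphere bundle. A Kasparov cycle representing ${\rm ind}_{APS}(D_{\hat{W}}(\hat{A}))\in KK(\field{C},B)\cong K_*(B)$ is given by a bounded transform of $D_{\hat{W}_{cyl}}(\hat{A})$ acting on $L^2(\hat{W}_{cyl},\hat{\mathcal{E}})$. Extending the fiberwise splitting above to all of $W_{cyl}$ yields an orthogonal decomposition $H_0\oplus H_1$ of this Hilbert $B$-module, with $H_0$ canonically $L^2(W_{cyl},\mathcal{E})$; on $H_0$ the operator restricts to $D_{W_{cyl}}(A)$, producing the Kasparov cycle for ${\rm ind}_{APS}(D_W(A))$. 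On $H_1$ the fiberwise operator is invertible, and an internal symmetry of the fiberwise spinor bundle (acting purely along the $S^{2k}$ fibers, hence $B$-linearly) provides, after the standard Kasparov homotopy absorbing the lower-order horizontal and curvature corrections into $\hat{A}$, a bounded self-adjoint odd involution $T$ that anticommutes modulo compacts with the bounded transform of the restricted operator. Lemma \ref{KKLemma} then forces the $H_1$-summand to vanish in $KK(\field{C},B)$, giving the desired equality. The principal technical obstacle is making the fiberwise decomposition of $D_{\hat{W}_{cyl}}(\hat{A})$ rigorous on the non-compact cylindrical end and controlling the horizontal and curvature correction terms so that the odd-symmetry hypothesis of Lemma \ref{KKLemma} holds modulo compacts; the compactness of $W$ and the product structure along the cylinder should make this manageable, but the estimates are the heart of the argument.
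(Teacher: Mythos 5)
Your overall skeleton matches the paper's: pass to the cylindrical-end manifolds, split the Hilbert module along the fiberwise kernel of the Bott-twisted sphere operator (one-dimensional, spanned by an even section), identify the operator on the kernel summand with $D_{W_{cyl}}(A)$, and kill the complement with an odd involution built from the grading and the phase of $D_{S^{2k}}$ via Lemma \ref{KKLemma}. In the case of trivial $V$ your argument is essentially the paper's.

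The gap is in how you handle non-trivial $V$. You propose to use the \emph{geometric} Dirac operator on $\hat{W}_{cyl}$ and then absorb ``the lower-order horizontal and curvature corrections'' by a Kasparov homotopy so that the involution anticommutes ``modulo compacts.'' This is precisely the hard and unproven step: for a non-flat $V$ the genuine Dirac operator on the sphere bundle does \emph{not} preserve the kernel/complement splitting (the horizontal Clifford terms and the curvature of the horizontal distribution mix the two summands), controlling such correction terms on a non-compact cylindrical end is delicate, Lemma \ref{KKLemma} as stated requires exact anticommutation, and perturbing $\hat{A}$ to absorb these terms risks destroying the invertibility of the boundary operator that the APS setup requires. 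The paper avoids all of this by exploiting the freedom built into the statement (the Dirac operator on $\hat{W}$ and $\hat{A}$ are \emph{chosen} in the proof, not given): following the partition-of-unity construction of \cite[Proposition 3.6]{BHS}, it trivializes the principal $spin^c(2k)$-bundle over a finite cover, averages $\sum_i \sigma_i \slashed{\partial}_i \sigma_i$ (and likewise for $A$) over the group to get equivariant operators $\tilde{R}$ and $\tilde{A}$, and thereby obtains an operator that decomposes \emph{exactly} as $\tilde{R}(\hat{A})\hat{\otimes}I + I\hat{\otimes}D_{S^{2k}}$, so the trivial-bundle argument applies verbatim. (Remark \ref{VBMAPSRem} explicitly concedes that this construction is ad hoc rather than canonical.) Your choice of $\hat{A}$ as $A\oplus 0$ with respect to the kernel splitting would also work for invertibility, but to close the argument you should either adopt the averaging construction of the operator itself, or supply the missing estimates showing the correction terms can be removed without leaving the class of admissible APS data.
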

\begin{remark} \label{VBMAPSRem}
A word concerning this proposition seems in order.  Perhaps most importantly, the proposition does not imply that the higher Atiyah-Patodi-Singer index is invariant under vector bundle modification.  The specific choice of spectral section and Dirac operator on the manifold $\hat{W}$ are important to the proof.  These operators are constructed via a partition of unity argument. 
\par  
In this regard, the statement of this proposition is unsatisfying in a number ways. In particular, one would hope to find a {\it canonical} construction of a spectral section on the modified manifold given one on the base; our construct of the spectral section is quite ad hoc. Despite this, the result suffices for our purposes.
\end{remark}
\begin{proof}
The structure of the proof is as follows.  By \cite[Propostion 2.1]{Wahl2}, the proof will be complete upon showing that the operators $D_{W_{cyl}}(A)$ and $D_{\hat{W}_{cyl}}(\hat{A})$ have the same index; of course, the construction of $\hat{A}$ and the Dirac operator are also required.  Apart from these constructions, the proof consists of two steps
\begin{enumerate}
\item proving the result in the case when $V$ is a trivial vector bundle.  The reader should note that in this case, $\hat{W}=W\times S^{2k}$;
\item using a partition unity argument to treat the case of general $V$.
\end{enumerate}
As such, the steps in the proof are the same as those in the proof of Proposition 3.6 in \cite{BHS}.  The case when $W$ is even dimensional is considered in detail; the odd case is left to the reader.  
\par
The case when $V$ is a trivial bundle is considered first.  In this case, $\hat{W}=W\times S^{2k}$ and we can take the product of the Dirac operators to form the Dirac operator on $W\times S^{2k}$. The identification 
$$L^2(\partial W \times S^{2k};(S_{\partial W} \otimes E_B|_{\partial W})\boxtimes \beta) \cong L^2(\partial W; S_{\partial W} \otimes E_B|_{\partial W}) \hat{\otimes} L^2(S^{2k};\beta)$$
will be used throughout. In particular, we apply it to define $\hat{A}:= (A\otimes I)$. It follows that $\hat{A}$ is selfadjoint and $\slashed{\partial}_{\partial W\times S^{2k}}+\hat{A}$ is invertible. To see that the latter of these statements holds, one (using the fact that we are working in a graded situation (see for example \cite[Section A.2]{HR})) notes that 
$$(\slashed{\partial}_{\partial W\times S^{2k}}+\hat{A})^2 = (\slashed{\partial}_{\partial W}+A)^2\otimes I + I\otimes \slashed{\partial}_{S^2k}^2 $$
where $\slashed{\partial}_{\partial W}$ and $\slashed{\partial}_{S^2k}$ are respectively the Dirac operators on $\partial W$ and $S^{2k}$. That this operator is invertible follows since $(\slashed{\partial}_{\partial W}+A)^2$ is invertible and both operators are positive. The invertiblity of the original operator follows since it is selfadjoint; in particular, 
$$(\slashed{\partial}_{\partial W\times S^{2k}}+\hat{A})^2=(\slashed{\partial}_{\partial W\times S^{2k}}+\hat{A})^*(\slashed{\partial}_{\partial W\times S^{2k}}+\hat{A})$$
\par
Let $\hat{\odot}$ denote the (graded) algebraic tensor product and $\mathcal{S}$ denote the spinor bundle of $S^{2k}$.  Then, on 
$$C^{\infty}_c(M_{cyl};\mathcal{E})\hat{\odot} C^{\infty}(S^{2k};\mathcal{S}\otimes \beta)\subset C^{\infty}_c(M_{cyl}\times S^{2k};\hat{\mathcal{E}})$$
the twisted Dirac operator on $W_{cyl}\times S^{2k}$ has the form
$$ \slashed{\partial}_{W_{cyl}} \hat{\otimes} I + I \hat{\otimes} \slashed{\partial}_{S^{2k}}$$
In fact, operator $\slashed{\partial}_{W_{cyl}\times S^{2k}} - c(dx_1) \hat{\chi} \hat{A}$ also decomposes in this way.  That is, on $C^{\infty}_c(M_{cyl};\mathcal{E})\hat{\odot} C^{\infty}(S^{2k};\mathcal{S}\otimes \beta)$, it is equal to 
$$(\slashed{\partial}_{W_{cyl}} +c(dx_1) \chi A) \hat{\otimes} I + I \hat{\otimes} \slashed{\partial}_{S^{2k}}$$
Here, the reader should note that $\hat{\chi}$ and $\chi$ are related as follows: $\hat{\chi}: W_{cyl} \times S^{2k} \rightarrow [0,1]$ is defined via $\hat{\chi}(w,z):= \chi(w)$.  The closure of the above operator (denoted by $D_{W_{cyl}\times S^{2k}}(\hat{A})$) therefore has the form  
$$D_{W_{cyl}\times S^{2k}}(\hat{A})= D_{W_{cyl}}(A)\hat{\otimes} I+ I \hat{\otimes} D_{S^{2k}} $$
as an operator on 
$$L^2(W_{cyl}\times S^{2k};\hat{\mathcal{E}}) \cong L^2(W_{cyl};\mathcal{E})\hat{\otimes}L^2(S^{2k};\mathcal{S}\otimes \beta)$$
We now apply techniques from \cite{BHS}.  Namely, the Hilbert module on which the operator $D_{W_{cyl}\times S^{2k}}(\hat{A})$ acts (as an unbounded operator) decomposes as follows
\begin{eqnarray*}
L^2(W_{cyl}\times S^{2k};\hat{\mathcal{E}}) & \cong & L^2(W_{cyl};\mathcal{E})\hat{\otimes}L^2(S^{2k};\mathcal{S}\otimes \beta) \\ 
& \cong &  (L^2(W_{cyl};\mathcal{E})\hat{\otimes}{\rm ker}(D_{S^{2k}})) \oplus (L^2(W_{cyl};\mathcal{E})\hat{\otimes} {\rm ker}(D_{S^{2k}})^{\bot})
\end{eqnarray*}
Moreover, the operator respects this decomposition.  That is, if $P$ denotes the projection onto $L^2(W_{cyl};\mathcal{E})\hat{\otimes}{\rm ker}(D_{S^{2k}})$, then
$$D_{W_{cyl}\times S^{2k}}(\hat{A})=PD_{W_{cyl}\times S^{2k}}(\hat{A})P + P^{\bot}D_{W_{cyl}\times S^{2k}}(\hat{A})P^{\bot}$$
The operator $PD_{W_{cyl}\times S^{2k}}(\hat{A})P$ acts as $D_{W_{cyl}}(A)$ on $L^2(W_{cyc};\mathcal{E})\hat{\otimes} {\rm ker}(D_{S^{2k}})$; to see this, note that ${\rm ker}(D_{S^{2k}})$ is one dimensional and is given by the span of an even section (see \cite[Proposition 3.11]{BHS}).  
\par
This reduces the proof (of the special case when $V$ is trivial) to showing that ${\rm ind}(P^{\bot}D_{W_{cyl}\times S^{2k}}(\hat{A})P^{\bot})=0$.  To this end, consider the operator $\gamma \otimes T$ where $\gamma$ is the grading operator and $T$ is the partial isometry in the polar decomposition of $D_{S^{2k}}$.  As the reader can verify (see also \cite[Section 4]{BHS}), this operator is an odd graded involution on $L^2(M;\mathcal{E})\hat{\otimes} {\rm ker}(D_{S^{2k}})^{\bot}$.  Moreover, $\gamma\otimes T$ anti-commutes with $P^{\bot}D^{prod}_{W_{cyl}\times S^{2k}}(\hat{A})P^{\bot}$.  Lemma \ref{KKLemma} implies that ${\rm ind}(P^{\bot}D_{M\times S^{2k}}P^{\bot})$ is zero.  This completes the proof in the case when $V$ is a trivial vector bundle.
\par
The general case is now considered.  As such, let $V$ be a general ${\rm spin^c}$-vector bundle with even-dimensional fibers. We must construct the Dirac operator and the operator, $\hat{A}$.  
\par
We begin with the Dirac operator on the boundary of $\hat{W}$.  Again, the reader should compare our construction with the one in the proof of Proposition 3.6 in \cite{BHS}. We denote the principal ${\rm Spin^c}(2k)$-bundle associated to the ${\rm spin^c}$-structure of $\partial \hat{W}$ by $\mathcal{P}_{\partial \hat{W}}$. The Dirac operator on the boundary (twisted by the relevant Hilbert module bundle), $D_{\partial \hat{W}}$ acts on a Hilbert module which is naturally isomorphic to 
\begin{equation} \label{nonTrivialBoundary}
(L^2(\mathcal{P}, \pi^*(S_{\partial W}\otimes E_B|_{\partial W})) \hat{\otimes} L^2(S^{2k};S_{S^{2k}}\otimes \beta))^{{\rm Spin^c}(2k)} 
\end{equation}
where
\begin{enumerate}
\item $\pi: \mathcal{P} \rightarrow \partial W$ is the projection map;
\item  $S_{W}$ and $S_{S^{2k}}$ are the spinor bundles over $\partial W$ and $S^{2k}$ respectively;
\item $\beta$ is the Bott bundle over $S^{2k}$ (for example, see \cite{BD}).
\end{enumerate}
Let $D_{S^{2k}}$ denote the Dirac operator on $S^{2k}$ twisted by the Bott bundle. In the  proof of Proposition 3.6 in \cite{BHS}, an equivariant, first order, formally self-adjoint differential operator acting on $L^2(\mathcal{P}; \pi^*(S_{\partial W}))$ is constructed. Let $R$ denote the operator obtained by twisting this operator by $\pi^*(E_B|_{\partial W})$; results in \cite{BHS} imply that
$$D_{\partial \hat{W}} = R \hat{\otimes}I + I \hat{\otimes} D_{S^{2k}}$$ 
The construction of $R$ depends on the following data (which we list and fix)
\begin{enumerate}
\item a finite open cover, $\{ U_j \}_{j\in J}$, of $\mathcal{P}$ such that $\mathcal{P}|_{U_j}$ is trivial for each $j\in J$; 
\item specific choices of trivializations, $\mathcal{P}|_{U_j} \cong {\rm spin^c}(2k)\times U_j$; 
\item a smooth partition of unity subordinate to the cover. 
\end{enumerate}
We define $\hat{A}$ to be $\pi^*(A)\hat{\otimes} I$. It is clear that $\hat{A}$ is a selfadjoint operator, but we must show that the operator 
$$D_{\partial \hat{W}} + \hat{A} = (R + \pi^*(A))\hat{\otimes}I + I \hat{\otimes} D_{S^{2k}}$$
is invertible. 
\par
The details are as follows. The operator $D_{\partial \hat{W}}$ respects the Hilbert module decomposition
$$(L^2(\mathcal{P}, \pi^*(S_{\partial W}\otimes E_B)) \hat{\otimes} L^2(S^{2k};S_{S^{2k}}\otimes \beta))^{{\rm Spin^c}(2k)} \cong \mathcal{E} \oplus \mathcal{E}^{\bot}$$
where $\mathcal{E} \cong L^2(\mathcal{P};\pi^*(S_{\partial W}\otimes E_B))^{{\rm spin^c}(2k)} \otimes {\rm ker}(D_{S^{2k}})$; the reader can find more details on this decompostion in the proof of Proposition 3.6 of \cite{BHS}. Combining this identification and the fact that ${\rm ker}(D_{S^{2k}})$ is one dimensional and given by the span of an even section (see \cite[Proposition 3.11]{BHS}), we have that $D_{\partial \hat{W}}+\hat{A}$ acts as $D_{\partial W} + A$ on the factor $\mathcal{E}$; hence, it is invertible on this factor. 
\par
On the second factor, we have that $I \hat{\otimes} D_{S^{2k}}$ is invertible (this observation uses the fact that the spectrum of $D_{S^{2n}}$ is discrete). Using an argument similar to the one used to show invertiblity in the case of a trivial $V$, it follows that the restriction of $D_{W}$ to the second factor is invertible.
\par
This completes the constructions on the boundary; for $\hat{W}_{cyl}$, we proceed as follows. Let 
$\mathcal{P}_{cyl}$ denote the principal ${\rm Spin}^c(2k)$-bundle associated with the ${\rm spin^c}$-structure of $\hat{W}_{cyl}$. The Dirac operator acts on 
$$(L^2(\mathcal{P}_{cyl}, \pi^*(S_{W_{cyl}}\otimes \tilde{E}_B)) \hat{\otimes} L^2(S^{2k};S_{S^{2k}}\otimes \beta))^{{\rm Spin^c}(2k)}$$
where 
\begin{enumerate}
\item $\pi_{W_{cyl}}: \mathcal{P}_{cyl} \rightarrow W_{cyl}$ is the projection map;
\item $S_{W_{cyl}}$ is the spinor bundle on $W_{cyl}$;
\item $\tilde{E}_B$ is the extension of $E_B$ from $W$ to $W_{cyl}$;
\item the other data (e.g., $S_{S^{2k}}$, $\beta$, etc) is as in Equation \eqref{nonTrivialBoundary} on the previous page.
\end{enumerate}
The construction of the equivariant first order formally self-adjoint differential operator from \cite{BHS} discussed above can be applied here also (see \cite{BHS} for further details); it leads to the following:
$$D_{\hat{W}}=R_{\hat{W}} \hat{\otimes} I + I \hat{\otimes} D_{S^{2k}}$$  
where 
\begin{enumerate}
\item $D_{\hat{W}}$ is the Dirac operator on $W_{cyl}$ twisted by $\tilde{E}_B$ (recall that $\tilde{E}_B$ is the extension of $E_B$ to $W_{cyl}$);
\item $R_{\hat{W}}$ is the operator constructed in the proof of Proposition 3.6 in \cite{BHS} (twisted by the relevant bundle);
\item $D_{S^{2k}}$ is the Dirac operator on $S^{2k}$ twisted by the Bott bundle.
\end{enumerate}
As in the construction on the boundary, $R_{\hat{W}}$ depends on the choice of a finite open cover, trivializations, and a  partition of unity subordinate to the cover. In addition, we require that this data is compatable with the choices made in the construction of $R$. For example, for the open cover (which we denote by $\{V_i\}_{i\in I}$) used to construct $\tilde{R}$, we assume that 
\begin{enumerate}
\item the bundle, $\mathcal{P}_{cyl}|_{V_i}$ is trivial for each $i\in I$;
\item for each $i\in I$, $V_i \cap \partial (\hat{W} \times (-\epsilon,\infty))$ is empty or equal to $U_j \times (-\epsilon,\infty)$ for some $j\in J$;  
\end{enumerate}
The reader should note that although $\hat{W}_{cyl}$ is not compact such a cover exists. Similar assumptions are required for the other data used to define $R_{\hat{W}}$.
\par
With all this data fixed, we can form operator $D_{\hat{W}}(\hat{A})=D_{\hat{W}}-c(dx_1) \hat{\chi} \hat{A}$ where $\hat{\chi}$ and $c(dx_1)$ are defined as in the case of a trivial $V$. This operator takes the form
$$R_{\hat{W}}(\hat{A})\otimes I + I \otimes D_{S^{2k}}$$
where the operator $R_{\hat{W}}(\hat{A})$ is given by $R_{\hat{W}} - c(dx_1)\hat{\chi}\pi^*(A)$. Using this decomposition, the proof given in the case when $V$ is trivial can be generalized to the case of a non-trivial $V$; the details are left to the reader.
\end{proof}

\subsection{The analytic index map}
For this development, it is more convenient to work with cycles of the form given in Definition \ref{cycBunDat} (i.e., cycles containing bundle data).  In fact, we need only consider cycles in $K_*(pt;\phi)$ since the general index map will be defined by
$${\rm ind}_{ana}: K_*(X;\phi) \rightarrow K_*(pt;\phi) \rightarrow K_{*+1}(C_{\phi}) $$  
where the first map is defined at the level of cycles via $(W,(E_{B_2},F_{B_1},\alpha),f) \mapsto (W,(E_{B_2},F_{B_1},\alpha))$ and the definition of the second map is the main objective of this section; the second map will also be denoted simply as ${\rm ind}_{ana}$.  To be precise, the geometric data considered in this section is the following.  Let 
\begin{enumerate}
\item $W$ be a compact ${\rm spin^c}$-manifold with boundary; 
\item $E_{B_2}$ be a $B_2$-bundle over $W$;
\item $F_{B_1}$ be a $B_1$-bundle over $\partial W$;
\item $\alpha: F_{B_1}\otimes_{\phi} B_2 \rightarrow E_{B_2}$ is an isomorphism;
\end{enumerate}  
The starting point for defining this index is the vanishing of index of the boundary operator (see for example \cite{LP}). We define the analytic index map from the $K_*(X;\phi)$ to $K_{*+1}(C_{\phi})$ under the assumption that
$$\phi_* : K_*(B_1) \rightarrow K_*(B_2) $$
is injective. We also note that a relevant example is the case when $\phi$ is the unital inclusion of the complex numbers into a ${\rm II_1}$-factor.
\par  
Additional geometric data must be fixed to define the higher Atiyah-Patodi-Singer index.  Let
\begin{enumerate}
\item $g$ denote a Riemannian metric on $W$ which is a product metric in a neighborhood of $\partial W$;
\item $\nabla_{F_{B_1}}$ a connection compatible with $g|_{\partial W}$;
\item $\nabla_{E_{B_2}}$ a connection which is compatible with $g$, $\nabla_{F_{B_1}}$, and the bundle isomorphism $\alpha$; 
\item $P$ a spectral section for the operator on the boundary (i.e., $D_{\partial W, F_{B_1}}$);
\end{enumerate}
With all this data fixed, results from \cite{LP} imply that there is a well-defined index 
$${\rm ind}(D^{P}_{W, E_{B_2}}) \in K_*(B_2)$$ 
As an element of $K_*(B_2)$, it depends on these choices (e.g., the metric, connections, and spectral section).  However, we will show that the image of this class under $r_*:K_*(B_2) \rightarrow K_{*+1}(C_{\phi})$ is independent of these choices.  
\par
To do so, a number of properties of the higher Atiyah-Patodi-Singer index are required.  These properties are that the higher Atiyah-Patodi-Singer index, spectral flow, and difference construction of spectral sections are each functorial.  The functorial properties of this index are discussed in \cite[Appendix C]{PS} while for spectral flow and the difference construction the reader can see \cite{Wahl}. 
\par
To state these properties precisely, additional notation is required. Recall that $\phi:B_1\rightarrow B_2$ is a unital $*$-homomorphism and $W$ is a compact ${\rm spin^c}$-manifold with boundary.  Further assume that $F^{\prime}_{B_1}$ is a $B_1$-bundle over all of $W$.  Let $P$ and $Q$ be spectral sections for $D_{\partial W, F^{\prime}_{B_1}|_{\partial W}}$.  The following three properties will be used
\begin{eqnarray}
\phi_*({\rm ind}^{B_1}(D^{P}_{W,F^{\prime}_{B_1}}))& =& {\rm ind}^{B_2}(D^{\phi_*(P)}_{W,F^{\prime}_{B_1} \otimes_{\phi} B_2}) \\
\phi_*({\rm sf}(D_{\partial W,F^{\prime}_{B_1}|_{\partial W},t};P,Q)) & = & {\rm sf}(D_{\partial W,F^{\prime}_{B_1}|_{\partial W}\otimes_{\phi}B_2,t};\phi_*(P),\phi_*(Q)) \label{sfFun} \\
\phi_*([P-Q]) & = & [\phi_*(P) - \phi_*(Q)] 
\end{eqnarray} 
where 
\begin{enumerate}
\item $D^{P}_{M,E}$ denotes the Dirac operator on $M$ twisted by $E$ with the boundary conditions associated to the spectral section $P$;
\item ${\rm ind}$ denotes the higher Atiyah-Patodi-Singer index;
\item ${\rm sf}(\: \cdot \:)$ denotes spectral flow (see \cite{Wahl} for further details); 
\item $[P-Q]\in K_*(B_1)$ denotes the difference class of $P$ and $Q$ (again further details can be found in \cite{MP} or \cite{Wahl});  
\end{enumerate}
\begin{define} \label{bouAnaIndDef}
Let $(W,(E_{B_2},F_{B_1},\alpha),f)$ be a cycle in $K_*(X;\phi)$ such that 
$${\rm ind}(D_{\partial W,F_{B_1}})=0\in K_{*+1}(B_1)$$  
Then, ${\rm ind}_{ana}(W,(E_{B_2},F_{B_1},\alpha),f):= r_*({\rm ind}(D^{P}_{W,E_{B_2}})) \in K_{*+1}(C_{\phi})$ where $P$ is any spectral section for $D_{\partial W, F_{B_1}}$ and $r_*: K_*(B_2) \rightarrow K_{*+1}(C_{\phi})$ is the map on $K$-theory induced from the $*$-homomorphism $r:SB_2 \rightarrow C_{\phi}$.
\end{define}
\begin{prop} \label{BCindexMapThm}
Let $(W,(E_{B_2},F_{B_1},\alpha),f)$ be a cycle in $K_*(X;\phi)$ and
assume that ${\rm ind}(D_{\partial W, F_{B_1}})=0$.  Then, the map ${\rm ind}_{ana}$ 
is well-defined as map on (isomorphism classes of) cycles. 
\end{prop}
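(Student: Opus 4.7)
The plan is to show that for a fixed cycle $(W,(E_{B_2},F_{B_1},\alpha),f)$, any two choices of the auxiliary data $(g,\nabla_{F_{B_1}},\nabla_{E_{B_2}},P)$ produce APS indices in $K_0(B_2)$ that differ by an element of $\phi_*(K_0(B_1))$. Since the Bockstein-type sequence from Theorem \ref{bockTypeSeq} (or more precisely the corresponding $K$-theory sequence $K_0(B_1)\xrightarrow{\phi_*}K_0(B_2)\xrightarrow{r_*}K_1(C_\phi)$) is exact, $r_*$ annihilates this ambiguity, so the value in $K_1(C_\phi)$ is canonical. Invariance under isomorphism of cycles is automatic from the fact that a $spin^c$-preserving diffeomorphism intertwines all of the analytic ingredients.

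First I would fix the metric $g$ and the connections $\nabla_{F_{B_1}}$, $\nabla_{E_{B_2}}$ and vary only the spectral section. Let $P,Q$ be two spectral sections for $D_{\partial W,F_{B_1}}$; then $\phi_*(P),\phi_*(Q)$ are spectral sections for $D_{\partial W,E_{B_2}|_{\partial W}}$ (after identification through $\alpha$). The relative index formula for higher APS boundary conditions (see \cite{LP,Wahl}) gives
\begin{equation*}
{\rm ind}_{APS}^{B_2}(D^{\phi_*(P)}_{W,E_{B_2}})-{\rm ind}_{APS}^{B_2}(D^{\phi_*(Q)}_{W,E_{B_2}})=[\phi_*(P)-\phi_*(Q)]\in K_0(B_2).
\end{equation*}
By the functoriality property (3) listed just before Definition \ref{bouAnaIndDef}, the right-hand side equals $\phi_*([P-Q])$, which lies in $\phi_*(K_0(B_1))$ and is therefore killed by $r_*$.

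Next I would handle a change of metric and connections. Given two complete data sets, interpolate by a smooth path $(g_t,\nabla_{F,t},\nabla_{E,t})_{t\in[0,1]}$ compatible with the bundle isomorphism $\alpha$; this induces a smooth family of boundary Dirac operators $D_{t,\partial W,F_{B_1}}$. The assumption $K_1(B_1)\cong 0$ implies the vanishing of the family index in $K_1(B_1\otimes C[0,1])\cong K_1(B_1)$, so a total spectral section $\mathcal P$ for the whole family exists. Applying the APS index construction over $[0,1]$ yields a class in $K_0(B_2\otimes C[0,1])\cong K_0(B_2)$ that is constant in $t$, so ${\rm ind}_{APS}^{B_2}(D^{\mathcal P_0}_{W,E_{B_2}})={\rm ind}_{APS}^{B_2}(D^{\mathcal P_1}_{W,E_{B_2}})$. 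Comparing $\mathcal P_0$ with an arbitrary spectral section $P_0$ at time $0$, and $\mathcal P_1$ with $P_1$ at time $1$, reduces the remaining discrepancy to the relative-index contribution of the previous paragraph, and so again to $\phi_*(K_0(B_1))$. Alternatively, one can phrase this variation directly via the spectral flow of the boundary family and invoke property (2) to locate the defect in $\phi_*(K_0(B_1))$.

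The main technical obstacle is arranging a genuine smooth family version of the relative index theorem on $[0,1]$ so that the familywise spectral section exists and the constancy in $t$ is literally correct; this uses $K_1(B_1)=0$ in an essential way (to kill the boundary family index) and the functoriality of the higher APS index under $\phi_*$ recorded in property (1). Once these inputs are in place, the concluding step is the one-line exactness argument $r_*\circ\phi_*=0$, which forces ${\rm ind}_{ana}(W,(E_{B_2},F_{B_1},\alpha),f)$ to depend only on the isomorphism class of the cycle.
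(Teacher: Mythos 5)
Your proposal is correct and follows essentially the same route as the paper: both arguments reduce the ambiguity in the choice of metric, connections, and spectral section to a spectral-flow term and a difference-class term via the relative index formulas of Leichtnam--Piazza, observe by functoriality under $\phi_*$ that these defects lie in $\phi_*(K_0(B_1))$, and conclude by exactness $r_*\circ\phi_*=0$. The only cosmetic difference is that you separate the variation of the spectral section from the variation of the geometric data, whereas the paper treats the one-parameter-family case first and then the general case where the spectral sections are not joined by a family; the ingredients are identical.
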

\begin{proof}
A proof that the index map is well-defined at the level of cycles amounts to showing the right-hand side of the equation is independent of the choice of metric, connection, and spectral section used to define the higher Atiyah-Patodi-Singer index.  We begin with a special case; let
\begin{enumerate}
\item $\{g_t\}_{t\in [0,1]}$ be a one parameter family of Riemannian metrics on $W$;
\item $\nabla_{F_{B_1},t}$ be a one parameter family of connections on $F_{B_1}$ which is compatible with $g_t|_{\partial W}$;
\item $\nabla_{E_{B_2},t}$ be a one parameter family of connections on $E_{B_2}$ which is compatible with $g_t$ and with the family of connections $\nabla_{F_{B_1},t}$;
\item $\hat{P}_t$ be a one parameter family of spectral sections for $D_{\partial W, F_{B_1}}$.
\end{enumerate}
Set $P=\phi_*(\hat{P}_t)$.  By functorial properties of spectral sections and the fact that $E_{B_2}|_{\partial W} \cong F_{B_1}\otimes_{\phi}B_2$, both $P_0$ and $P_1$ are spectral sections for $D_{\partial W, E_{B_1}|_{\partial W}}$. Using this data, the following indices are well-defined: 
$${\rm ind}(D^{P_0}_{W,E_{B_2}}) \hbox{ and } {\rm ind}(D^{P_1}_{W,E_{B_2}})$$
Then, \cite[Proposition 8]{LP} implies that
$${\rm ind}(D^{P_0}_{W,E_{B_2}})-{\rm ind}(D^{P_1}_{W,E_{B_2}})={\rm sf}(\{D_{\partial W,(E_{B_2})|_{\partial W},t};P_0,P_1)\in K_*(B_2) $$ 
where ${\rm sf}(D_{\partial W,E|_{\partial W},t};P_0,P_1)$ is the spectral flow of the family of operators on the boundary (again see \cite{LP}).  Functorial properties of spectral flow (i.e., Equation \ref{sfFun}) imply that ${\rm sf}(D_{\partial W,E_{B_2},t};P_0,P_1)$ is in the image of $\phi_*$.  Exactness (i.e., $r_*\circ \phi_*=0$) leads to 
$$r_*({\rm ind}(D^{P_0}_{W,E_{B_2}}))-r_*({\rm ind}(D^{P_1}_{W,E_{B_2}}))=0 \in K_{*+1}(C_{\phi})$$
This completes the proof of the special case.
\par
The only difference in the general case is that we cannot assume that the spectral sections, $\hat{P}_0$ and $\hat{P}_1$, are joined via a one-parameter family.  However, there does exists a family of spectral section $\hat{Q}_t$.  As above, set $P_0=\phi_*(\hat{P}_0)$, $P_1=\phi_*(\hat{P}_1)$, and $Q_t=\phi_*(\hat{Q}_t)$.  Then, using \cite[Proposition 8 and Theorem 8]{LP}, we have
\begin{eqnarray*}
{\rm ind}(D^{P_0}_{W,E_{B_2}})-{\rm ind}(D^{P_1}_{W,E_{B_2}}) & = & {\rm ind}(D^{P_0}_{W,E_{B_2}})-{\rm ind}(D^{P_1}_{W,E_{B_2}}) \\
 & & -{\rm ind}(D^{Q_0}_{W,E_{B_2}})+{\rm ind}(D^{Q_1}_{W,E_{B_2}}) \\
 & & +{\rm sf}(\{D_{\partial W,(E_{B_2})|_{\partial W},t};Q_0,Q_1)  \\
& = & [Q_0-P_0]+ [P_1-Q_0] \\
& & +{\rm sf}(\{D_{\partial W,(E_{B_2})|_{\partial W},t};Q_0,Q_1)
\end{eqnarray*} 
Applying $r_*$ to this equation and using the functorial properties of the difference classes and spectral flow leads to
\begin{eqnarray*}
r_*({\rm ind}(D^{P_0}_{W,E_{B_2}}))-r_*({\rm ind}(D^{P_1}_{W,E_{B_2}})) & = & (r_*\circ \phi_*) ( [\hat{Q}_0-\hat{P}_0]+ [\hat{P}_1-\hat{Q}_0] \\ 
& & +{\rm sf}(\{D_{\partial W,(F_{B_1}),t};\hat{Q}_0,\hat{Q}_1))
\end{eqnarray*}
Exactness then implies the result.
\end{proof}
\begin{theorem}
If $\phi_* : K_*(B_1)\rightarrow K_*(B_2)$ is injective, then the analytic index map (see Definition \ref{bouAnaIndDef}) induces a well-defined map $ K_*(X;\phi) \rightarrow K_{*+1}(C_{\phi})$.  
\end{theorem}
\begin{proof}
The injective of $\phi_*$ implies that the conditions of Proposition \ref{BCindexMapThm} are satisfied for any cycle in $K_*(X;\phi)$. Thus the index map is well-defined at the level of cycles.  We need to show that the map respects the three relations.  \\
{\bf Disjoint union/direct sum:}  This follows from basic properties of the higher Atiyah-Patodi-Singer index.  \\
{\bf Bordism:} Let $(Z,W,(E^{\prime}_{B_2},F^{\prime}_{B_1},\alpha^{\prime}),g)$ be a bordism and $(W,(E_{B_2},F_{B_1},\alpha),f)$ denote its boundary.  Denote by $(M,V_{B_1},h)$ the $K_*(X;B_1)$-bordism obtained by restricting the given data to the ${\rm spin^c}$ manifold with boundary, $\partial W - {\rm int}(W)$.  Let $P$ and $Q$ be spectral sections for $D_{\partial W, F_{B_1}}$ and $D_{\partial M,V_{B_1}|_{\partial M}}$ respectively and $\tilde{P}$ and $\tilde{Q}$ denote the spectral sections (for $D_{\partial W, F_{B_1}\otimes_{\phi}B_2}$ and $D_{\partial M, V_{B_1}|_{\partial M}\otimes_{\phi}B_2}$ respectively) obtained via the $*$-homomorphism $\phi$. Using \cite[Theorem 8]{LP} and the functorial properties listed above, the indices on the various manifolds (we suppress the bundle data from the notation) involved are related via
\begin{eqnarray*}
{\rm ind}^{B_2}(D^{\tilde{P}}_W)+\phi_*({\rm ind}^{B_1}(D^{I-Q}_M) & = & {\rm ind}^{B_2}(D^{\tilde{P}}_W) + {\rm ind}^{B_2}(D^{I-\tilde{Q}}_M) \\
& = & {\rm ind}^{B_2}_{AS}(D_{W\cup M}) + [\tilde{P}-\tilde{Q}] \\
& = & {\rm ind}^{B_2}_{AS}(D_{W\cup M}) + \phi_*([P-Q]) 
\end{eqnarray*}
The fact that $r_* \circ \phi_*=0$ implies that
$$\mu_{ana}(W,(E_{B_2},F_{B_1},\alpha),f)=r_*({\rm ind}^{B_2}(D_W))=r_*({\rm ind}^{B_2}_{AS}(D_{W\cup M}))$$
Finally, the bordism invariance of the Mishchenko-Fomenko index and the fact that $W\cup M=\partial Z$ (the bundles respect this bordism) imply that the right-hand side of this equation vanishes.  This proves the required bordism invariance. \\ 
{\bf Vector bundle modification:}  Let $(W,(E_{B_2},F_{B_1},\alpha,f)$ denote a cycle and $V$ a ${\rm spin^c}$-vector bundle of even rank over $W$.  Since the higher Atiyah-Patodi-Singer index respects disjoint union, we may assume that $W$ is connected. Using Proposition \ref{APSandVBM}, we have that 
$${\rm ind}(D_W(A))={\rm ind}(D_{\hat{W}}(\hat{A})) \in K_*(B_2)$$
where we have used the notation of Proposition \ref{APSandVBM}. However, the definition of $\mu_{ana}$ is independent of the choice of spectral section (see Proposition \ref{BCindexMapThm}).  As such, 
\begin{eqnarray*}
\mu_{ana}(W,(E_{B_2},F_{B_1},\alpha,f)) & = & r_*({\rm ind}^{B_2}(D_W(A))) \\
& = & r_*({\rm ind}^{B_2}(D_{\hat{W}}(\hat{A}))) \\
& = & \mu_{ana}(W,(E_{B_2},F_{B_1},\alpha),f)^V)
\end{eqnarray*} 
\end{proof}
\begin{theorem} \label{indBouEqu}
Suppose that $\phi_*: K_*(B_1) \rightarrow K_*(B_2)$ is injective (so that analytic index is well-defined). Then the topological index and analytic index are equal.  In particular, in the case of $X=pt$, the analytic index is an isomorphism.
\end{theorem}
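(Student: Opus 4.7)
The plan is to reduce to a cycle-wise comparison at $X = pt$ and match the analytic and topological indices by means of the higher Atiyah-Patodi-Singer index theorem, using exactness of the Bockstein sequence of Theorem \ref{bockTypeSeq} to kill a defect term.

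First, both index maps are natural in $X$: the topological one by Corollary \ref{topIndCorPt} (since ${\rm ind}_{top} = c^* \circ \mu$), and the analytic one because Definition \ref{bouAnaIndDef} already factors through $K_0(pt;\phi)$ via a map that ignores $f:W\to X$. Hence it suffices to verify the equality on $K_0(pt;\phi)$.

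Fix a cycle $(W,(E_{B_2},F_{B_1},\alpha))$ and a $K$-oriented neat embedding $i:W \hookrightarrow H^{2k}$ with normal bundle $\nu$. The topological index is, by definition, ${\rm ind}_{top}(W,\xi) = i!(\xi) \in K^0(H^{2k},\field{R}^{2k-1};\phi) \cong K_1(C_{\phi})$. On the analytic side, choose the spectral section $P = \phi_*(\hat{P})$ induced from a $B_1$-spectral section $\hat{P}$ of $D_{\partial W, F_{B_1}}$ (available because $K_1(B_1) \cong 0$), so that ${\rm ind}_{ana}(W,\xi) = r_*({\rm ind}_{APS}(D^P_{W,E_{B_2}}))$. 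I would then invoke the higher APS index theorem for Hilbert $B_2$-module bundles (Wahl; cf.\ Leichtnam–Piazza) to write ${\rm ind}_{APS}(D^P_{W,E_{B_2}})$ as the topological $B_2$-push-forward of $E_{B_2}$ along $i$, corrected by the difference class $[P_{\nu} - P] \in K_0(B_2)$, where $P_{\nu}$ is the spectral section naturally attached to the embedding. Since both $P$ and $P_{\nu}$ arise via $\phi_*$ from $B_1$-spectral sections on $\partial W$ (the bundle $E_{B_2}|_{\partial W}$ being identified by $\alpha$ with $\phi_*(F_{B_1})$), the functorial property of the difference construction used in the proof of Proposition \ref{BCindexMapThm} shows that $[P_{\nu} - P]$ lies in the image of $\phi_*:K_0(B_1) \to K_0(B_2)$.

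Applying $r_*$ kills the defect by exactness ($r_* \circ \phi_* = 0$ from Theorem \ref{bockTypeSeq}), leaving
\begin{equation*}
{\rm ind}_{ana}(W,\xi) = r_*\bigl(i!(E_{B_2})\bigr) \in K_1(C_{\phi}).
\end{equation*}
The commutative diagram of Proposition \ref{KKTrig} identifies $r_*(i!(E_{B_2}))$ with the image of $i!(\xi)$ under the suspension chain $K^0(H^{2k},\field{R}^{2k-1};\phi) \cong K_0(SC_{\phi}) \cong K_1(C_{\phi})$, and this is precisely ${\rm ind}_{top}(W,\xi)$. The second assertion then follows from the first together with Corollary \ref{topIndCorPt}. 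I expect the main obstacle to be pinning down the exact defect formula in the higher APS theorem and verifying that it really lies in the image of $\phi_*$; concretely, this reduces to checking that the spectral section $P_{\nu}$ attached to the geometry of a neat embedding $W \hookrightarrow H^{2k}$ coincides, up to a difference class in $\phi_*(K_0(B_1))$, with the image under $\phi_*$ of a $B_1$-spectral section for $D_{\partial W, F_{B_1}}$, which is exactly the structural reason the analytic index in Definition \ref{bouAnaIndDef} is taken after applying $r_*$.
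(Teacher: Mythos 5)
Your reduction to $X=pt$ matches the paper's first step, but from there the paper takes a much more economical route that you should compare with yours: since $K_1(B_1)\cong 0$, the exact sequence of Theorem \ref{bockTypeSeq} (applied to a point) shows that $r:K_0(pt;B_2)\to K_0(pt;\phi)$ is \emph{onto}, so every class $(W,\xi)$ is equivalent to $r(M,\eta)$ for a \emph{closed} $spin^c$-manifold $M$ and $\eta\in K^0(M;B_2)$. Because both index maps are already known to be well-defined on $K_0(pt;\phi)$, one may compute them on the representative $r(M,\eta)$, where both visibly equal $r_*$ of the Mishchenko--Fomenko index of $(M,\eta)$. No APS index formula on a manifold with boundary is ever needed.

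Your direct computation on the original $(W,(E_{B_2},F_{B_1},\alpha))$ has two genuine gaps. First, the ``higher APS index theorem'' you invoke --- expressing ${\rm ind}_{APS}(D^P_{W,E_{B_2}})$ as a topological push-forward of $E_{B_2}$ corrected by a difference class $[P_\nu-P]$ with $P_\nu$ ``attached to the embedding'' --- is not established in the paper or in the cited references in that form; the available tools (the relative index theorem ${\rm ind}(D^P)-{\rm ind}(D^Q)=[Q-P]$ and the gluing formula of Leichtnam--Piazza) compare two APS indices or glue along a bordism, and producing your defect formula would essentially amount to reproving the theorem. Second, the final identification ${\rm ind}_{top}(W,\xi)=r_*\bigl(i!(E_{B_2})\bigr)$ does not follow from Proposition \ref{KKTrig}: that diagram only controls the \emph{images} of $i!(\xi)$ under the maps of the long exact sequence (i.e.\ it says $p_2(i!(\xi))=i_W!(\xi_W)$), and there is no natural map $K^0(W;B_2)\to K^0(W,\partial W;\phi)$ sending $E_{B_2}$ to $\xi$ when $\partial W\neq\emptyset$ --- the boundary data $(F_{B_1},\alpha)$ genuinely enters $i!(\xi)$, and it can only be discarded after first replacing the cycle by a bordant one with closed underlying manifold, which is exactly the step your argument omits and the paper's argument supplies.
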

\begin{proof}
The second statement in the theorem follows from the first and the fact that the topological index is an isomorphism in the case of a point. 
To prove the first statement, note that both the topological index and analytic index factor through the map
$$K_*(X;\phi) \rightarrow K_*(pt;\phi)$$
defined at the level of cycles via $(W,\xi,f) \mapsto (W,\xi)$.  Thus, we need only show that they give the same isomorphism from $K_*(pt;\phi)$ to $K_{*+1}(C_{\phi})$. Using Theorem \ref{bockTypeSeq}, we have that  exactness and the injectivity of $\phi_*$ imply that the map $r:K_*(pt;B_2) \rightarrow K_{*+1}(pt;\phi)$ is onto.  This implies that given a cycle $(W,\xi) \in K_*(pt;\phi)$ there exists {\it closed} compact ${\rm spin^c}$-manifold $M$ and $\eta \in K^0(M;B_2)$ such that $r(M,\eta) \sim (W,\xi)$ (in the group $K_{*+1}(pt;\phi)$). The theorem now follows, since both the topological and analytic index of $(W,\xi)$ are equal to $\tilde{r}\circ {\rm ind}_{K_*(B_2)}(M,\eta)$ where ${\rm ind}_{K_*(B_2)}(M,\eta)$ denotes the Mishchenko-Fomenko index and $\tilde{r}:K_*(B_2) \rightarrow K_{*+1}(C_{\phi})$ is the map on $K$-theory induced from the natural $*$-homomorphism $SB_2 \rightarrow C_{\phi}$.
\end{proof}
\begin{remark}
Under the assumptions in the statement of the previous theorem, its proof implies that {\it any} index map $K_*(X;\phi) \rightarrow KK^*(\field{C},SC_{\phi})$ which agrees with the Mishchenko-Fomenko index on cycles without boundary is equal to the topological index map.  In particular, this statement holds (up to a factor of $-1$) for the index map discussed in \cite{DeeGeoRelKhom} for the special case when $\phi$ is the unital inclusion of the complex number into a ${\rm II}_1$-factor. Note that since the index map discussed in \cite{DeeGeoRelKhom} takes values in $\rz$, we must fix the isomorphism from $KK(\field{C},SC_{\phi})$ to $\rz$ to be the one compatible with isomorphism from $KK(\field{C},N)$ to $\field{R}$ defined via the trace of the ${\rm II}_1$-factor, $N$.  
\end{remark}
\vspace{0.25cm} 
\noindent 
{\bf Acknowledgments} \\
I thank Heath Emerson, Magnus Goffeng, Nigel Higson, Ralf Meyer and Thomas Schick for discussions.  This work was supported by NSERC through a postdoctoral fellowship held at Georg-August Universit${\rm \ddot{a}}$t, G${\rm \ddot{o}}$ttingen. I also thank the referee for a number of useful comments that have improved the exposition of the paper.

\vspace{0.25cm}
Email address: robin.deeley@gmail.com \vspace{0.25cm} \\
{ \footnotesize Department of Mathematics, University of Hawaii \\
2565 McCarthy Mall, Keller 401A \\
Honolulu HI 96822 USA}
\end{document}